\newtheorem{remark}{Remark}
\newtheorem{assumption}{Assumption}
\newtheorem{lemma}{Lemma}
\newtheorem{theorem}{Theorem}
\newtheorem{proposition}{Proposition}
\begin{document}



\title{On Relationship between Primal-Dual Method of Multipliers and Kalman Filter}

\graphicspath{{figures/}}
%

\author{Guoqiang~Zhang, W. Bastiaan Kleijn and Richard Heusdens
\thanks{G.~Zhang is with the Center of Audio, Accoustic and Vibration (CAAV), School of Electrical and Data Engineering, University of Technology, Sydney, Australia. Email: {guoqiang.zhang@uts.edu.au}}
\thanks{W.~B.~Kleijn is with is the school of Engineering and Computer Science, Victoria University of Wellington, New Zealand.
Email: {bastiaan.kleijn@ecs.vuw.ac.nz}}
\thanks{R.~Heusdens is with the Department of Microelectronics, Circuits and Systems group, Delft University of Technology, Delft, the Netherlands. Email: {r.heusdens@tudelft.nl}}
}

\maketitle
\begin{abstract}
Recently the primal-dual method of multipliers (PDMM), a novel distributed optimization method, was proposed for solving a general class of decomposable convex optimizations over graphic models. In this work, we first study the convergence properties of PDMM for decomposable quadratic optimizations over tree-structured graphs. We show that with proper parameter selection, PDMM converges to its optimal solution in finite number of iterations. We then apply PDMM for the causal estimation problem over a statistical linear state-space model.  We show that PDMM and the Kalman filter have the same update expressions, where PDMM can be interpreted as solving a sequence of quadratic optimizations over a growing chain graph.  
\end{abstract}


\begin{IEEEkeywords}
Distributed optimization, ADMM, PDMM, Kalman filter, finite convergence.
\end{IEEEkeywords}

%
\IEEEpeerreviewmaketitle

\section{Introduction}
In recent years, distributed optimization has drawn increasing attention due to the demand for big-data processing and easy access to ubiquitous computing units (e.g., a computer, a mobile phone or a sensor equipped with a CPU). The basic idea is to have a set of computing units collaborate with each other in a distributed way to complete a complex task. Popular applications include telecommunication \cite{Richardson08Coding,xiaoqiang13ADMMLDPC}, wireless sensor networks \cite{Boyd06gossip}, cloud computing and machine learning \cite{Sontag11ML}. The research challenge is on the design of efficient and robust distributed optimization algorithms for those applications.

Among various distributed optimization methods, the alternating direction method of multipliers (ADMM) is probably the most popular method being studied and applied in practice. It is aimed at solving the following decomposable convex optimization problem:  
\begin{align}
\min_{\boldsymbol{x},\boldsymbol{z}} f(\boldsymbol{x})+g(\boldsymbol{z}) \textrm{ s.t. } \boldsymbol{A}\boldsymbol{x}+\boldsymbol{B}\boldsymbol{z}= \boldsymbol{c}, 
\label{equ:problemTwoNode}
\end{align}
where $\textrm{s.t.}$ stands for ``subject to", and the two matrices $(\boldsymbol{A},\boldsymbol{B})$ and the vector $\boldsymbol{c}$ are known a priori. Many practical optimization problems can be reformulated or reduced to the form (\ref{equ:problemTwoNode}), such as  the resource allocation in wireless networks \cite{Joshi13ADMM},  compressive sensing \cite{Yang11ADMM,Zhu17ADMM},  the image denoising problem \cite{Figueiredo10ADMM}, and telecommunication problems \cite{Joshi13ADMMMISO, xiaoqiang13ADMMLDPC}.

Recent research activities on ADMM have focused on improving its convergence speed. The work of \cite{Teixeira16ADMM} considers the optimal parameter selection of ADMM for the quadratic consensus problem. In \cite{Ghadimi15ADMM}, the authors study the optimal parameter selection for  quadratic programming.  The work in \cite{Giselsson17ADMM} considers the optimal parameter selection when either $f(\boldsymbol{x})$ or $g(\boldsymbol{z})$ in (\ref{equ:problemTwoNode}) is a quadratic function.   

Recently, we have proposed the primal-dual method of multipliers (PDMM) \cite{Zhang16PDMM} for solving a general class of decomposable optimization problems over an undirected graphical model $G=\{\mathcal{V},\mathcal{E}\}$ that takes the form:
\begin{align}
\min \sum_{i\in \mathcal{V}} f_i(\boldsymbol{x}_i) \textrm{ s.t. } \boldsymbol{A}_{i|j}\boldsymbol{x}_i + \boldsymbol{A}_{j|i}\boldsymbol{x}_j = \boldsymbol{c}_{ij} \; \forall (i,j)\in \mathcal{E}, \label{equ:problemMultiNode}
 \end{align}
 where the subscript $i|j$ indicates that the matrix $\boldsymbol{A}_{i|j}$ operates on $\boldsymbol{x}_i$ for the equality constraint over the edge $(i,j)\in \mathcal{E}$. In principle, ADMM can also be applied to solve (\ref{equ:problemMultiNode}) by introducing a set of auxiliary variables (see \cite{Zhang16PDMM} or \cite{Shi14ADMM} for a reformulation of (\ref{equ:problemMultiNode}) into (\ref{equ:problemTwoNode})). PDMM is carefully designed by avoiding the auxiliary variables as required in ADMM. 

In the literature, PDMM has been applied successfully for solving a number of practical problems. The work of \cite{Heming15Thesis} investigates the efficiency of ADMM and PDMM for distributed dictionary learning. In \cite{xiaoqiang16BiADMM}, we have used both ADMM and PDMM for training a support vector machine (SVM). In the above examples it is found that PDMM outperforms ADMM in terms of convergence rate. Furthermore, PDMM has also been successfully applied to distributed speech enhancement over a wireless microphone network (see  \cite{Tavakoli16PDMM, Tavakoli17PDMM, Connor16PDMM, Sherson16LCMV_conf}).  The basic principle in those works is to reformulate different speech enhancement problems as decomposable convex optimizations of form (\ref{equ:problemMultiNode}), allowing the application of PDMM.   

Theoretical convergence analyses of PDMM are provided in \cite{Zhang16PDMM} and \cite{Sherson17PDMM}. The work of \cite{Zhang16PDMM} uses the variational inequality to conduct the analysis while \cite{Sherson17PDMM} makes use of monotone operator theory \cite{Ryu16Mono}, \cite{Eckstein_phdThesis} to interpret PDMM as a Peaceman-Rachford splitting method.  

Differently from PDMM, which is motivated from distributed optimization, the Kalman filter is designed from the perspective of estimation theory. The filter was introduced in the 1960s by Kalman and Bucy \cite{Kalman60}, \cite{Kalman61newresults}.  Given a stochastic linear state-space model driven by Gaussian noise, the filter is able to perform causal optimal estimation of the current hidden state from the past observations or measurements efficiently \cite{KailathBook}. The filter has found many successful applications since its introduction, such as object tracking, state and parameter estimation for control or diagnosis, and signal processing (see \cite{Auger13Kalman} for an overview). 

In this work, we attempt to characterize the relationship between PDMM and the Kalman filter. To do so, we first study the performance of PDMM for a distributed quadratic optimization over a tree structured graph. We show that by proper parameter selection, the method converges to the optimal solution in finite number of iterations for a fixed graph.  


We then consider applying PDMM to the causal estimation over a state-space model as the Kalman filter does. It can be shown that when the state-space model is driven by Gaussian noise, the estimation problem is equivalent to a maximum likelihood  (ML) problem. By associating each time step of the state-space model with a node, the ML problem becomes a decomposable quadratic optimization over a growing chain graph as the state-space model evolves over time. We show that applying PDMM to the reformulated ML problem with proper parameter selection produces the same updating expressions as the Kalman filter under mild conditions.         

We note that our work is related to but different from the distributed Kalman filter (DKF) \cite{Olfati07DKF}.  The DFK considers the problem that each node in a sensor network receives node-dependent measurements of a common linear state-space model over time. The objective is to estimate the common hidden state cooperatively by the sensors in a distributed manner over time. On the other hand, PDMM is primarily designed for solving the time-invariant problem (\ref{equ:problemMultiNode}).  



The remainder of the paper is organized as follows. Section~\ref{sec:pre} introduces notations and formally defines the convex optimization (\ref{equ:problemMultiNode}) over a graphic model as well as the quadratic optimization problem. Section~\ref{sec:PDMM} describes both synchronous PDMM and asynchronous PDMM  for solving the general convex problem (\ref{equ:problemMultiNode}). In Section~\ref{sec:finiteConvergence}, we show that with proper parameter selection, PDMM converges in finite steps for the considered quadratic optimization problem. Based on the results in Section~\ref{sec:finiteConvergence},  we show in Section~\ref{sec:RelationKalmanFilter} that the Kalman filter and asynchronous PDMM share the same updating expressions. Finally, we draw conclusions and discuss future works in Section~\ref{sec:conclusion}. 

 

\vspace{-1mm}
\section{Problem Definition}
\vspace{-1mm}
\label{sec:pre}
\subsection{Notations}
\vspace{-1mm}
We first introduce notations for an undirected graphical model. We denote an undirected graph as $G=(\mathcal{V},\mathcal{E})$, where $\mathcal{V}=\{1,\ldots, m\}$ represents the set of nodes and $\mathcal{E}=\{(i,j)| i, j\in \mathcal{V}\}$ represents the set of undirected edges in the graph, respectively. If $(i,j)\in \mathcal{E}$, node $i$ and $j$ can communicate with each other directly along their  edge $(i,j)$.  We use $\mathcal{N}_i$ to denote the set of all neighboring nodes of node $i$, i.e., $\mathcal{N}_i=\{j|(i,j)\in \mathcal{E}\}$. The notation $\mathcal{N}_{i}/ j$ represents the set of  neighbours of $i$ except neighbour $j$. The undirected graph $G$ will be used for problem definition and presentation of the updating procedure of PDMM.  

We denote a directed acyclic graph (DAG) as $\vec{G}=(\mathcal{V},\vec{\mathcal{E}})$, where $\vec{\mathcal{E}}=\{[i,j]| i, j\in \mathcal{V}\}$ represents the set of all directed edges. The directed edge $[i,j]$ indicates that node $i$ can reach node $j$ through the edge but not in reverse order.  The graph being acyclic implies that there exists no path starting at any node $i$ and following a sequence of directed edges that eventually loops back to $i$ again. The DAG $\vec{G}$ will be used for parameter selection of PDMM for the quadratic optimization later on. 

Next we introduce notations for mathematical description in the remainder of the paper. We use bold lower-case letters to denote vectors and bold capital letters to denote matrices. We use $\boldsymbol{I}$ to denote an identity matrix. The notation $\boldsymbol{M}\succeq 0$ (or $\boldsymbol{M}\succ 0$) represents a symmetric positive semi-definite matrix (or a symmetric positive definite matrix). The superscript $(\cdot)^T$ represents the transpose operator while $(\cdot)^{-1}$ represents matrix inversion. Given a vector $\boldsymbol{y}$, we use $\|\boldsymbol{y}\|$ to denote its $l_2$ norm. Furthermore, $\|\boldsymbol{y}\|_{\boldsymbol{M}}^2$ represents the computation $\boldsymbol{y}^T\boldsymbol{M}\boldsymbol{y}$. 

\vspace{-2mm}
\subsection{Decomposable convex optimization}
\vspace{-1mm}

With the notation $G=(\mathcal{V},\mathcal{E})$ for an undirected graph, problem (\ref{equ:problemMultiNode}) can be formally defined as
\begin{align}
\min_{\boldsymbol{x}} \sum_{i\in \mathcal{V}} f_i(\boldsymbol{x}_i) \textrm{ s. t. } \boldsymbol{A}_{i|j}\boldsymbol{x}_i + \boldsymbol{A}_{j|i}\boldsymbol{x}_{j}=\boldsymbol{c}_{ij} \quad (i,j)\in \mathcal{E}, 
\label{equ:problemMultiNode_re}
\end{align}
where each function $f_i: \mathbb{R}^{n_i}\rightarrow \mathbb{R}\cup \{+\infty\} $ is assumed to be closed, proper and convex, and $\boldsymbol{x}=[\boldsymbol{x}_1^T,\boldsymbol{x}_2^T,\ldots,\boldsymbol{x}_m^T]^T$ is referred to as the \emph{primal} variable. For every edge $(i,j)\in \mathcal{E}$, we have $(\boldsymbol{c}_{ij},\boldsymbol{A}_{i | j}, \boldsymbol{A}_{j| i})\in (\mathbb{R}^{n_{ij}},\mathbb{R}^{n_{ij}\times n_i},\mathbb{R}^{n_{ij}\times n_j})$. The vector $\boldsymbol{x}$ is thus of dimension $n_{\boldsymbol{x}}=\sum_{i\in \mathcal{V}} n_i$. In general, $\boldsymbol{A}_{i | j}$ and $\boldsymbol{A}_{j | i}$ are two different matrices. The matrix $\boldsymbol{A}_{i | j}$ operates on $\boldsymbol{x}_i$ in the linear constraint of edge $(i,j)\in\mathcal{E}$. 

The Lagrangian function for (\ref{equ:problemMultiNode_re}) can be constructed as
\begin{align}
\hspace{-1.2mm}L(\boldsymbol{x},\boldsymbol{\delta})\hspace{-0.8mm}= \hspace{-0.5mm}\sum_{i\in \mathcal{V}}\hspace{-0.4mm} f_i(\boldsymbol{x}_i)\hspace{-0.5mm}+\hspace{-2mm}\sum_{(i,j)\in \mathcal{E}}\hspace{-1.5mm}\boldsymbol{\delta}_{ij}^{T}(\boldsymbol{c}_{ij}\hspace{-0.5mm}-\hspace{-1mm}\boldsymbol{A}_{i | j}\boldsymbol{x}_i\hspace{-0.5mm}-\hspace{-1mm}\boldsymbol{A}_{j  | i}\boldsymbol{x}_j\hspace{-0.3mm}), \label{equ:primalLag}
\end{align}
where $\boldsymbol{\delta}_{ij}$ is the Lagrangian multiplier (or the dual variable) for the corresponding edge constraint in (\ref{equ:problemMultiNode_re}), and the vector $\boldsymbol{\delta}$ is obtained by stacking all the dual variables $\boldsymbol{\delta}_{ij}$, $(i,j)\in \mathcal{E}$. Therefore, $\boldsymbol{\delta}$ is of dimension $n_{\boldsymbol{\delta}}=\sum_{(i,j)\in \mathcal{E}}n_{ij}$. The Lagrangian function is convex in $\boldsymbol{x}$ for fixed $\boldsymbol{\delta}$, and concave in $\boldsymbol{\delta}$ for fixed $\boldsymbol{x}$. We make the following assumption for the convex optimization (\ref{equ:problemMultiNode_re}):

\begin{assumption}
There exists a saddle point $(\boldsymbol{x}^{\star},\boldsymbol{\delta}^{\star})$ to the Lagrangian function $L(\boldsymbol{x},\boldsymbol{\delta})$ such that for all $\boldsymbol{x}\in \mathbb{R}^{n_{\boldsymbol{x}}}$ and $\boldsymbol{\delta}\in \mathbb{R}^{n_{\boldsymbol{\delta}}}$ we have
\begin{align}
L(\boldsymbol{x}^{\star},\boldsymbol{\delta}) \leq L(\boldsymbol{x}^{\star},\boldsymbol{\delta}^{\star})\leq L(\boldsymbol{x},\boldsymbol{\delta}^{\star}) \nonumber.
\end{align}
Or equivalently, the following optimality (KKT) conditions hold for $(\boldsymbol{x}^{\star},\boldsymbol{\delta}^{\star})$:
\begin{align}
\sum_{j\in \mathcal{N}_i}\boldsymbol{A}_{i | j}^T\boldsymbol{\delta}_{ij}^{\star} \in \partial f_i(\boldsymbol{x}_i^{\star}) &\quad \forall i\in \mathcal{V} \label{equ:KKT_prim_1} \\
\boldsymbol{A}_{j | i}\boldsymbol{x}_j^{\star}+\boldsymbol{A}_{i | j}\boldsymbol{x}_i^{\star}=\boldsymbol{c}_{ij} &\quad \forall (i,j)\in \mathcal{E}. \label{equ:KKT_prim_2}
\end{align}\vspace{-4mm}
\label{assumption:KKT}
\end{assumption}
 
 A saddle point $(\boldsymbol{x}^{\star},\boldsymbol{\delta}^{\star})$ to the Lagrangian function $L(\boldsymbol{x},\boldsymbol{\delta})$  provides an optimal solution $\boldsymbol{x}^{\star}$ to the original problem (\ref{equ:problemMultiNode_re}). There might be many saddle points to the Lagrangian function. 
 
 \vspace{-2mm}
\subsection{Decomposable quadratic optimization}
\vspace{-1mm}

As a special case of (\ref{equ:problemMultiNode_re}), a decomposable quadratic optimization can be represented as
\begin{align}
\boldsymbol{x}^{\star}=&\min_{\boldsymbol{x}} \sum_{i\in \mathcal{V}} \left(f_i(\boldsymbol{x}_i)=\frac{1}{2}\boldsymbol{x}_i^T\boldsymbol{\Sigma}_i \boldsymbol{x}_i-\boldsymbol{a}_i^T\boldsymbol{x}_i\right) \nonumber 
\end{align}
\begin{align}
& \textrm{ s. t. } \boldsymbol{A}_{i|j}\boldsymbol{x}_i + \boldsymbol{A}_{j|i}\boldsymbol{x}_{j}=\boldsymbol{c}_{ij} \quad (i,j)\in \mathcal{E}, 
\label{equ:quadOptimization}
\end{align}
where $\boldsymbol{\Sigma}_i$, $i\in \mathcal{V}$, is a symmetric positive semi-definite matrix (i.e., $\boldsymbol{\Sigma}_i \succeq 0$).  In this case, the optimality conditions (\ref{equ:KKT_prim_1})-(\ref{equ:KKT_prim_2}) would become a set of equations. We investigate the quadratic optimization (\ref{equ:quadOptimization}) in Section~\ref{sec:finiteConvergence}.

\section{Primal-Dual Method of Multipliers}
\label{sec:PDMM}
In this section, we present the updating procedure of PDMM for the decomposable convex optimization (\ref{equ:problemMultiNode_re}). We first introduce synchronous PDMM, where all the variables are updated simultaneously per iteration. We then introduce asynchronous PDMM, where only the variables of a selected node are updated per iteration. Finally, we simplify the updating procedure of synchronous and asynchronous PDMM into the so-called \emph{message-passing} framework. 

\subsection{Synchronous PDMM}
Before presenting the updating procedure for (\ref{equ:problemMultiNode_re}), we first introduce a set of dual variables required for the method to work. Let $\boldsymbol{\lambda}_{i|j}$ and $\boldsymbol{\lambda}_{j|i}$ be two dual variables for every edge constraint $\boldsymbol{A}_{i | j}\boldsymbol{x}_i + \boldsymbol{A}_{j | i}\boldsymbol{x}_{j}=\boldsymbol{c}_{ij}$, which are actually two copies of the dual variable $\boldsymbol{\delta}_{ij}$ in (\ref{equ:primalLag}). The variable $\boldsymbol{\lambda}_{i|j}$ is owned by and updated at node $i$ and is related to neighboring node $j$. We denote by $\boldsymbol{\lambda}_i$ the concatenation of all $\boldsymbol{\lambda}_{i|j}$, $j\in \mathcal{N}_i$.  Therefore each node $i$ carries both a primal variable $\boldsymbol{x}_i$ and a dual variable $\boldsymbol{\lambda}_i$.  Similarly to $\boldsymbol{x}$, we let $\boldsymbol{\lambda}=[\boldsymbol{\lambda}_1^T,\ldots,\boldsymbol{\lambda}_{m}^T ]^T$. 

Synchronous PDMM updates $\boldsymbol{x}$ and $\boldsymbol{\lambda}$ simultaneously per iteration by performing node-oriented computation. At iteration $k$, each $i$ computes a new estimate $\hat{\boldsymbol{x}}_{i}^{k+1}$ by locally solving a small-size optimization problem. In doing so, the neighboring estimates $\{\hat{\boldsymbol{x}}_j^{k}|j\in \mathcal{N}_i\}$ and $\{\hat{\boldsymbol{\lambda}}_{j|i}^{k}|j\in \mathcal{N}_i\}$ from last iteration are utilized. Once $\hat{\boldsymbol{x}}_{i}^{k+1}$ is obtained, the new estimate of each dual variable $\hat{\boldsymbol{\lambda}}_{i|j}^{k+1}$ is computed. The update expressions for  $\hat{\boldsymbol{x}}_{i}^{k+1}$ and  $\hat{\boldsymbol{\lambda}}_{i}^{k+1}$ can be represented as \cite{Zhang16PDMM}
\begin{align}
\hspace{-2mm}\hat{\boldsymbol{x}}_i^{k+1}\hspace{-1.5mm}=&\arg\min_{\boldsymbol{x}_i}\hspace{-1mm}\Bigg[
\hspace{-0.8mm}\sum_{j\in \mathcal{N}_i}\hspace{-0.8mm}\frac{1}{2}\left\|\boldsymbol{A}_{i | j}\boldsymbol{x}_i+\boldsymbol{A}_{j |  i}\hat{\boldsymbol{x}}_j^{k}-\boldsymbol{c}_{ij}\right\|_{\boldsymbol{P}_{ij}^{-1}}^2 \nonumber\\
&\hspace{10mm}-\hspace{-0.4mm}\boldsymbol{x}_i^{T}\Bigg(\hspace{-0.5mm}\sum_{j\in \mathcal{N}_i}\hspace{-1.5mm}\boldsymbol{A}_{i | j}^T\hat{\boldsymbol{\lambda}}_{j|i}^{k}\hspace{-0.5mm}\Bigg)\hspace{-0.5mm}+\hspace{-0.5mm}f_i(\boldsymbol{x}_i)\hspace{-0.5mm}\Bigg]  \;\;\;\;\; i\in \mathcal{V} \label{equ:x_updateSyn0} \\
\hspace{-2mm}\hat{\boldsymbol{\lambda}}_{i|j}^{k+1}\hspace{-1.5mm}=& \hat{\boldsymbol{\lambda}}_{j|i}^{k} \hspace{-0.7mm}+\hspace{-0.8mm}
\boldsymbol{P}_{ij}^{-1}(\boldsymbol{c}_{ij}\hspace{-0.7mm}-\hspace{-0.8mm}\boldsymbol{A}_{j | i}\hat{\boldsymbol{x}}_j^k\hspace{-0.7mm}-\hspace{-0.7mm}\boldsymbol{A}_{i | j}\hat{\boldsymbol{x}}_i^{k+1}) \hspace{0.7mm} i\in \mathcal{V}, j\in \mathcal{N}_i, \hspace{-2mm}\label{equ:lambda_updateSyn0}
\end{align}
where $\boldsymbol{P}_{ij}\succ 0$ is a weighting matrix associated with the edge constraint $\boldsymbol{A}_{i | j}\boldsymbol{x}_i + \boldsymbol{A}_{j | i}\boldsymbol{x}_{j}=\boldsymbol{c}_{ij}$.  Differently from $\boldsymbol{A}_{i | j}$ or $\boldsymbol{A}_{j | i}$ which implicitly has a direction, the matrix $\boldsymbol{P}_{ij}$ has no direction, i.e., $\boldsymbol{P}_{ij}=\boldsymbol{P}_{ji}$. We let $\mathcal{P} = \{\boldsymbol{P}_{ij}\succ 0 | (i,j)\in \mathcal{E}\}$ represent the set of all the weighting matrices. 

The update expressions (\ref{equ:x_updateSyn0})-(\ref{equ:lambda_updateSyn0}) hold for the general problem formulation (\ref{equ:problemMultiNode_re}). Therefore, they also hold for the quadratic optimization (\ref{equ:quadOptimization}), of which an analytic expression for $\hat{\boldsymbol{x}}_i^{k+1}$ can  be easily derived by replacing $f_i(\boldsymbol{x}_i)$ with the corresponding quadratic function. 

Convergence analyses for synchronous PDMM have been provided in \cite{Zhang16PDMM} and \cite{Sherson17PDMM}. We present one convergence property from \cite{Zhang16PDMM} in a theorem below. The theorem will be used for analysis later on.  
\begin{theorem}
If the estimate $\hat{\boldsymbol{x}}_i^{k}$ of node $i$ converges to a fixed point $\boldsymbol{x}_i'$ ($\lim_{k\rightarrow \infty}\hat{\boldsymbol{x}}_i^{k}=\boldsymbol{x}_i'$), we have $\boldsymbol{x}_i' =\boldsymbol{x}_i^{\star}$, where $\boldsymbol{x}_i^{\star}$ is the $i$th component of an optimal solution $\boldsymbol{x}^{\star}$ of (\ref{equ:problemMultiNode_re}).
\label{theorem:syn}
\end{theorem}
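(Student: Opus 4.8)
The plan is to exhibit the limit $\boldsymbol{x}'=[\boldsymbol{x}_1'^T,\ldots,\boldsymbol{x}_m'^T]^T$, paired with a suitable dual vector, as a point satisfying the KKT conditions (\ref{equ:KKT_prim_1})--(\ref{equ:KKT_prim_2}); by Assumption~\ref{assumption:KKT} this certifies that $\boldsymbol{x}'$ is an optimal solution of (\ref{equ:problemMultiNode_re}), which forces $\boldsymbol{x}_i'=\boldsymbol{x}_i^{\star}$. The argument splits into a stationarity part, a feasibility part, and a limiting part.

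First I would extract the stationarity relation already embedded in the update. Writing the first-order (subgradient) optimality condition for the local minimization (\ref{equ:x_updateSyn0}) gives $\sum_{j\in\mathcal{N}_i}\boldsymbol{A}_{i|j}^T\hat{\boldsymbol{\lambda}}_{j|i}^{k}-\sum_{j\in\mathcal{N}_i}\boldsymbol{A}_{i|j}^T\boldsymbol{P}_{ij}^{-1}(\boldsymbol{A}_{i|j}\hat{\boldsymbol{x}}_i^{k+1}+\boldsymbol{A}_{j|i}\hat{\boldsymbol{x}}_j^{k}-\boldsymbol{c}_{ij})\in\partial f_i(\hat{\boldsymbol{x}}_i^{k+1})$. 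Reading the dual update (\ref{equ:lambda_updateSyn0}) as $\boldsymbol{P}_{ij}^{-1}(\boldsymbol{A}_{i|j}\hat{\boldsymbol{x}}_i^{k+1}+\boldsymbol{A}_{j|i}\hat{\boldsymbol{x}}_j^{k}-\boldsymbol{c}_{ij})=\hat{\boldsymbol{\lambda}}_{j|i}^{k}-\hat{\boldsymbol{\lambda}}_{i|j}^{k+1}$, the two dual terms telescope and the condition collapses to the clean per-iteration identity $\sum_{j\in\mathcal{N}_i}\boldsymbol{A}_{i|j}^T\hat{\boldsymbol{\lambda}}_{i|j}^{k+1}\in\partial f_i(\hat{\boldsymbol{x}}_i^{k+1})$, which is precisely (\ref{equ:KKT_prim_1}) with $\hat{\boldsymbol{\lambda}}_{i|j}^{k+1}$ in the role of $\boldsymbol{\delta}_{ij}^{\star}$. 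Thus stationarity is maintained along the whole trajectory, and only feasibility plus a passage to the limit remain.

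Next I would establish feasibility (\ref{equ:KKT_prim_2}). Since $\hat{\boldsymbol{x}}_i^{k}\to\boldsymbol{x}_i'$ we have $\hat{\boldsymbol{x}}_i^{k+1}-\hat{\boldsymbol{x}}_i^{k}\to 0$ for every $i$. Adding the update (\ref{equ:lambda_updateSyn0}) for $\hat{\boldsymbol{\lambda}}_{i|j}^{k+1}$ to its twin for $\hat{\boldsymbol{\lambda}}_{j|i}^{k+1}$ on the same edge, and using $\boldsymbol{P}_{ij}=\boldsymbol{P}_{ji}$ and $\boldsymbol{c}_{ji}=\boldsymbol{c}_{ij}$, the increment of $\boldsymbol{s}_{ij}^{k}:=\hat{\boldsymbol{\lambda}}_{i|j}^{k}+\hat{\boldsymbol{\lambda}}_{j|i}^{k}$ obeys $\boldsymbol{s}_{ij}^{k+1}-\boldsymbol{s}_{ij}^{k}\to 2\boldsymbol{P}_{ij}^{-1}(\boldsymbol{c}_{ij}-\boldsymbol{A}_{i|j}\boldsymbol{x}_i'-\boldsymbol{A}_{j|i}\boldsymbol{x}_j')$. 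Because $\boldsymbol{P}_{ij}^{-1}\succ 0$, a nonzero limiting residual would drive $\boldsymbol{s}_{ij}^{k}$ to grow linearly and hence without bound; ruling this out yields $\boldsymbol{c}_{ij}-\boldsymbol{A}_{i|j}\boldsymbol{x}_i'-\boldsymbol{A}_{j|i}\boldsymbol{x}_j'=0$, i.e. (\ref{equ:KKT_prim_2}). Finally I would pass to the limit in the stationarity identity: by outer semicontinuity of $\partial f_i$, any accumulation point $\boldsymbol{\delta}_{ij}^{\star}$ of $\hat{\boldsymbol{\lambda}}_{i|j}^{k+1}$ gives $\sum_{j\in\mathcal{N}_i}\boldsymbol{A}_{i|j}^T\boldsymbol{\delta}_{ij}^{\star}\in\partial f_i(\boldsymbol{x}_i')$, so $(\boldsymbol{x}',\boldsymbol{\delta}^{\star})$ meets (\ref{equ:KKT_prim_1})--(\ref{equ:KKT_prim_2}) and Assumption~\ref{assumption:KKT} delivers $\boldsymbol{x}_i'=\boldsymbol{x}_i^{\star}$. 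In the quadratic case (\ref{equ:quadOptimization}) this last step is automatic, since $\partial f_i(\boldsymbol{x}_i)=\{\boldsymbol{\Sigma}_i\boldsymbol{x}_i-\boldsymbol{a}_i\}$ is single-valued and continuous, whence $\sum_{j}\boldsymbol{A}_{i|j}^T\hat{\boldsymbol{\lambda}}_{i|j}^{k+1}=\boldsymbol{\Sigma}_i\hat{\boldsymbol{x}}_i^{k+1}-\boldsymbol{a}_i$ converges directly to $\boldsymbol{\Sigma}_i\boldsymbol{x}_i'-\boldsymbol{a}_i$.

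The hard part will be the boundedness invoked in the feasibility step: the hypothesis supplies convergence of the primal iterates only, not of the dual iterates $\hat{\boldsymbol{\lambda}}$, so I must separately argue that primal convergence precludes a linear drift of $\boldsymbol{s}_{ij}^{k}$. I would handle this either by noting that a genuine fixed point of the PDMM operator leaves $\hat{\boldsymbol{\lambda}}$ stationary as well, or by a nonexpansiveness/boundedness estimate on the dual sequence of the kind available in the operator-splitting analysis of \cite{Sherson17PDMM}; equivalently, one may read ``converges to a fixed point'' as convergence of the full state $(\hat{\boldsymbol{x}}^{k},\hat{\boldsymbol{\lambda}}^{k})$, in which case every limit above is immediate and Steps~one--three reduce to substituting the fixed point into (\ref{equ:x_updateSyn0})--(\ref{equ:lambda_updateSyn0}). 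The residual outer-semicontinuity argument for the subdifferential is then routine.
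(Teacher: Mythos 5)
First, a point of comparison you could not have known: the paper does not actually prove Theorem~\ref{theorem:syn}. It is quoted as a known property of synchronous PDMM and attributed to \cite{Zhang16PDMM}, so there is no in-paper argument to measure your proposal against. That said, your route --- show that the limit, paired with a limiting dual vector, satisfies the KKT system (\ref{equ:KKT_prim_1})--(\ref{equ:KKT_prim_2}) and invoke Assumption~\ref{assumption:KKT} --- is the natural one, and your per-iteration identity $\sum_{j\in\mathcal{N}_i}\boldsymbol{A}_{i|j}^T\hat{\boldsymbol{\lambda}}_{i|j}^{k+1}\in\partial f_i(\hat{\boldsymbol{x}}_i^{k+1})$, obtained by substituting the dual update (\ref{equ:lambda_updateSyn0}) into the first-order condition of (\ref{equ:x_updateSyn0}), is correct and is exactly the mechanism that makes PDMM work.

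Two things still need to be closed. The gap you flag yourself is genuine and is not resolved by your suggested fixes as stated: the hypothesis gives convergence of the primal iterates only, and both your feasibility step (ruling out linear drift of $\boldsymbol{s}_{ij}^{k}$) and your limiting stationarity step (extracting an accumulation point of $\hat{\boldsymbol{\lambda}}_{i|j}^{k+1}$) require boundedness of the dual sequence, which does not follow from primal convergence alone --- the stationarity identity pins down only the image $\sum_{j}\boldsymbol{A}_{i|j}^{T}\hat{\boldsymbol{\lambda}}_{i|j}^{k+1}$, not the individual multipliers, unless the $\boldsymbol{A}_{i|j}$ have suitable rank. Your first remedy (``a genuine fixed point of the PDMM operator leaves $\hat{\boldsymbol{\lambda}}$ stationary'') presupposes the dual is already at a fixed point, which is the thing to be proved; the second imports hypotheses from \cite{Sherson17PDMM} (there $\boldsymbol{P}_{ij}=\rho\boldsymbol{I}$) that are not assumed here. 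The clean resolution is the reading you mention last: take ``converges to a fixed point'' to mean convergence of the full state $(\hat{\boldsymbol{x}}^{k},\hat{\boldsymbol{\lambda}}^{k})$, which is surely the intended meaning. Even then you must add one step you currently omit: the KKT system uses a single multiplier $\boldsymbol{\delta}_{ij}^{\star}$ per edge, whereas your stationarity identity produces $\boldsymbol{\lambda}_{i|j}'$ at node $i$ and $\boldsymbol{\lambda}_{j|i}'$ at node $j$. This reconciliation does hold --- writing (\ref{equ:lambda_updateSyn0}) at the fixed point for $i|j$ and for $j|i$ and adding forces the edge residual to vanish (which also gives (\ref{equ:KKT_prim_2}) without any drift argument), and then either equation yields $\boldsymbol{\lambda}_{i|j}'=\boldsymbol{\lambda}_{j|i}'=:\boldsymbol{\delta}_{ij}^{\star}$ --- but it must be stated, since otherwise (\ref{equ:KKT_prim_1}) has not been verified with a common multiplier at both endpoints.
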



The set $\mathcal{P}$ of positive definite matrices has a great impact on the convergence speed of PDMM. See \cite{Zhang16PDMM} for empirical evidence on how different parameters affect its convergence speed for the distributed averaging problem. In next section, we  will show that by proper selection of $\mathcal{P}$, the method has a finite-time convergence for the quadratic optimization (\ref{equ:quadOptimization}) over a tree-structured graph.     

We note that for the updating procedure (\ref{equ:x_updateSyn0})-(\ref{equ:lambda_updateSyn0}) to work, the local estimates $\{(\hat{\boldsymbol{x}}_i,\hat{\boldsymbol{\lambda}}_i)|i\in \mathcal{V}\}$ need to be exchanged between neighboring nodes after every iteration. By doing so, every node $i$ gradually captures global information of all the objective functions $\{f_i\}$ and the constraints through its local estimate $(\hat{\boldsymbol{x}}_i,\hat{\boldsymbol{\lambda}}_i)$. 

\begin{remark}
We note that the work in \cite{Sherson17PDMM} focuses on a special structure of the matrix set $\mathcal{P}$, namely $\boldsymbol{P}_{i,j}=\rho \boldsymbol{I}$ for every $\boldsymbol{P}_{i,j} \in \mathcal{P}$, where $\rho>0$. One research direction is to extend the work of \cite{Sherson17PDMM} to the general matrix form $\boldsymbol{P}_{i,j} \succ \boldsymbol{0}$, $(i,j)\in \mathcal{E}$.  
\end{remark}

\subsection{Asynchronous PDMM}
In \textit{asynchrous} PDMM for each iteration, only the variables associated with one node in the graph update their estimates while all other variables keep their estimates fixed. Suppose node $i$ is selected at iteration $k$. We then compute $(\hat{\boldsymbol{x}}_i^{k+1},\hat{\boldsymbol{\lambda}}_i^{k+1})$  by following  (\ref{equ:x_updateSyn0})-(\ref{equ:lambda_updateSyn0}). At the same time, the estimates $(\hat{\boldsymbol{x}}_j^k,\hat{\boldsymbol{\lambda}}_j^{k})$, $j\neq i$, remain the same. That is 
\begin{align}
(\hat{\boldsymbol{x}}_j^{k+1},\hat{\boldsymbol{\lambda}}_j^{k+1})=(\hat{\boldsymbol{x}}_j^k,\hat{\boldsymbol{\lambda}}_j^{k})\quad \forall j\neq i.
\label{equ:PDMM_asyn}
\end{align}
The asynchronous updating scheme is more flexible  than the synchronous scheme in that no global node-coordination is needed for parameter-updating. 

In practice, the nodes in the graph can either be selected randomly or follow a predefined order for asynchronous parameter-updating. One scheme for realizing random node-activation is that after a node finishes parameter-updating, it randomly activates one of its neighbours for the next iteration. Another scheme is to introduce a clock at each node which ticks at the times of a (random) Poisson process (see \cite{Boyd06gossip} for detailed information). Each node is activated only when its clock ticks. As for node-activation in a predefined order, a cyclic updating scheme is most straightforward. Without loss of generality, the nodes can be selected in order from $i=1$ to $i=m=|\mathcal{V}|$ periodically, or equivalently $i_k=\textrm{mod}(i,m)$ where $\textrm{mod}(\cdot,\cdot)$ denotes the modulus operation.  Once node $i$ finishes its parameter updating operation, it informs node $i+1$ for next iteration. For the case that node $i$ and $i+1$ are not neighbours, the path from node $i$ to $i+1$ can be pre-stored at node $i$ to facilitate the process. 

Differently from synchronous PDMM, the theoretical analysis for asynchronous PDMM depends on how the individual node is selected per iteration. \cite{Zhang16PDMM} provides convergence analysis for the cyclic updating scheme while \cite{Sherson17PDMM} utlilizes monotone operator theory to conduct analysis for the random updating scheme.


\subsection{Message-passing framework}
In this subsection, we replace each dual variable $\boldsymbol{\lambda}_{j|i}$ by a new variable $\boldsymbol{m}_{j\rightarrow i}$ in the updating procedure of PDMM.  We refer to an estimate $\hat{\boldsymbol{m}}_{j\rightarrow i}$ as a \emph{message} from node $j$ to node $i$. We will show in the following that when updating $\hat{\boldsymbol{x}}_{i}$ at node $i$, only the neighboring messages $\{\hat{\boldsymbol{m}}_{j\rightarrow i}|j\in \mathcal{N}_i\}$ are required instead of $\{(\hat{\boldsymbol{x}}_j,\hat{\boldsymbol{\lambda}}_{j|i})|j\in \mathcal{N}_i\}$ in the original updating expression. 

Our main motivation to replace $\{\boldsymbol{\lambda}_{j|i}\}$ with $\{\boldsymbol{m}_{j\rightarrow i}\}$ is that the introduction of $\{\boldsymbol{m}_{j\rightarrow i}\}$ makes it easier to analyze the relationship between PDMM and the Kalman filter in later sections. 
Further, the new updating expression involving $\{\boldsymbol{m}_{j\rightarrow i}\}$ is more similar to the update expressions of other types of message-passing methods, such as the linear-coordinate decent (LiCD) method \cite{xiaoqiang12LiCDQO} and the min-sum method \cite{Moallemi09GaBP,Moallemi10BPConvex}. 

\begin{table}[t!]
\caption{Synchronous PDMM}
\label{table:PDMM} \vspace{-2mm}
\centering
\begin{tabular}{l}
  \hline
   \textrm{Initialization}: $\hat{\boldsymbol{x}}$ and $\{\hat{\boldsymbol{m}}_{i\rightarrow j}|i\in \mathcal{V}, j\in\mathcal{N}_i \}$\\
   Repeat\\
   \hspace{3mm} For all $i\in \mathcal{V}$ do \\
   \hspace{5mm}  $\hat{\boldsymbol{x}}_i^{k+1}\hspace{-0.5mm}=\hspace{-0.5mm}\arg\min_{\boldsymbol{x}_i}\hspace{-1mm}\Big[f_i(\boldsymbol{x}_i)\hspace{-0.5mm} $ \\
	\hspace{33mm}$+\underset{{j\in \mathcal{N}_i}}{\sum}\frac{1}{2}\| \boldsymbol{A}_{i | j}\boldsymbol{x}_i\hspace{-0.5mm} -\hat{\boldsymbol{m}}_{j\rightarrow i}^k\|_{\boldsymbol{P}_{ij}^{-1}}^2\Big]$ \\
	   \hspace{5mm} For all  $j\in \mathcal{N}_i$ do \\
   \hspace{7mm} $\hat{\boldsymbol{m}}_{i\rightarrow j}^{k+1}=\hat{\boldsymbol{m}}_{j\rightarrow i}^{k}+(\boldsymbol{c}_{ij}-2\boldsymbol{A}_{i | j}\hat{\boldsymbol{x}}_i^{k+1})$ \\
	 \hspace{5mm} End For \\
	 \hspace{3mm} End For \\
	 \hspace{3mm}$k\leftarrow k+1$ \\
	Until some stopping criterion is met \\
  \hline
\end{tabular}
\vspace{-5mm}
\end{table}

Formally, we define the variable $\boldsymbol{m}_{j\rightarrow i}$ to be
\begin{align}
\boldsymbol{m}_{j\rightarrow i} &= \boldsymbol{P}_{ij}{\boldsymbol{\lambda}}_{j|i} + (\boldsymbol{c}_{ij}-\boldsymbol{A}_{j | i}{\boldsymbol{x}}_j) \quad i\in\mathcal{V},  j\in \mathcal{N}_i, \label{equ:message_ji_def}
\end{align}
which is a function of ${\boldsymbol{\lambda}}_{j|i}$ and ${\boldsymbol{x}}_j$. 
By using algebra, one can show that the updating expression (\ref{equ:x_updateSyn0}) for $\hat{\boldsymbol{x}}_i^{k+1}$, $i\in \mathcal{V}$, can be simplified as
\begin{align}
\hspace{-3mm}\hat{\boldsymbol{x}}_i^{k+1}\hspace{-1mm}=&\arg\min_{\boldsymbol{x}_i}\hspace{-1mm}\Bigg[
\hspace{-0.5mm}\sum_{j\in \mathcal{N}_i}\hspace{-0.8mm}\frac{1}{2}\left\|\boldsymbol{A}_{i | j}\boldsymbol{x}_i\hspace{-0.6mm}-\hspace{-0.6mm}\hat{\boldsymbol{m}}_{j\rightarrow i}^k\right\|_{\boldsymbol{P}_{ij}^{-1}}^2 \hspace{-0.6mm}+\hspace{-0.6mm}f_i(\boldsymbol{x}_i)\hspace{-0.5mm}\Bigg], \label{equ:x_updateSyn1}
\end{align}
where 
\begin{align}
\hat{\boldsymbol{m}}_{j\rightarrow i}^k = \boldsymbol{P}_{ij}\hat{\boldsymbol{\lambda}}_{j|i}^k + (\boldsymbol{c}_{ij}-\boldsymbol{A}_{j | i}\hat{\boldsymbol{x}}_j^k)\quad j\in \mathcal{N}_i.\label{equ:message_ji_k_def}
\end{align}
Once $\hat{\boldsymbol{x}}_i^{k+1}$ is computed, the new message  $\hat{\boldsymbol{m}}_{i\rightarrow j}^{k+1}$ from  $i$ to $j$  can be expressed in terms of  $\hat{\boldsymbol{m}}_{j\rightarrow i}^k $  and $\hat{\boldsymbol{x}}_i^{k+1}$  as 
\begin{align}
\hspace{-2mm}\hat{\boldsymbol{m}}_{i\rightarrow j}^{k+1}\hspace{-0.5mm}=& 
\boldsymbol{P}_{ij}\hat{\boldsymbol{\lambda}}_{i|j}^{k+1}+(\boldsymbol{c}_{ij}-\boldsymbol{A}_{i | j}\hat{\boldsymbol{x}}_i^{k+1} ) \nonumber
 \\
=& \hat{\boldsymbol{m}}_{j\rightarrow i}^{k} +(\boldsymbol{c}_{ij}-2\boldsymbol{A}_{i | j}\hat{\boldsymbol{x}}_i^{k+1} ),\label{equ:message_update} 
\end{align}
where the expression is derived by using  (\ref{equ:lambda_updateSyn0}) and (\ref{equ:message_ji_k_def}).

Equ.~(\ref{equ:x_updateSyn1}) and (\ref{equ:message_update}) together specify the new form of updating procedure in terms of $\boldsymbol{x}$ and $\{\boldsymbol{m}_{i\rightarrow j}|  i\in \mathcal{V}, j\in \mathcal{N}_i\}$.  Table~\ref{table:PDMM} summarizes the reformulated updating procedure for synchronous PDMM. 

\begin{remark}
Replacing $\{\boldsymbol{\lambda}_{j|i}\}$ with $\{\boldsymbol{m}_{j\rightarrow i}\}$ in the updating expressions of PDMM may also be beneficial in practice. For instance, every time after finishing local computations using synchronous PDMM, each pair $(i,j)\in \mathcal{E}$ of neighbouring nodes only need to exchange their messages $\hat{\boldsymbol{m}}_{i \rightarrow j}$ and  $\hat{\boldsymbol{m}}_{j\rightarrow i}$ instead of their primal and dual estimates $(\hat{\boldsymbol{x}}_i, \hat{\boldsymbol{\lambda}}_{i | j})$ and $(\hat{\boldsymbol{x}}_j, \hat{\boldsymbol{\lambda}}_{j | i})$, reducing the number of transmission parameters.    
\end{remark}

\vspace{-2mm}
\section{Finite-Time Convergence of PDMM for quadratic optimization over \\ a Tree-Structured Graph}
\vspace{-1mm}
\label{sec:finiteConvergence}

In this section, we consider using PDMM to solve the quadratic optimization (\ref{equ:quadOptimization}) over a tree-structured graph.  The  research goal is to find a particular setup of the matrix set $\mathcal{P}$ which leads to finite-time convergence of PDMM.  

\subsection{Parameter selection}
\label{subsec:param_selection}

The selection of the matrix set $\mathcal{P}$ is crucial to make PDMM converge in finite steps. In the following we explain how to construct the matrices in  $\mathcal{P}$ recursively by using the quadratic matrices $\{\boldsymbol{\Sigma}_i|i\in \mathcal{V}\}$ and $\{(\boldsymbol{A}_{i | j},\boldsymbol{A}_{j | i})|(i,j)\in \mathcal{E}\}$ in (\ref{equ:quadOptimization}).   

In order to construct the matrices in  $\mathcal{P}$, we first convert the (undirected) tree-structured graph $G=\{\mathcal{V},\mathcal{E}\}$ into a directed acyclic graph  $\vec{G}=\{\mathcal{V},\vec{\mathcal{E}}\}$. Firstly, we select any node $r\in \mathcal{V}$ to be a root node in the graph.  We then convert every undirected edge of $\mathcal{E}$ into a directed edge of $\vec{\mathcal{E}}$ w.r.t. the root node $r$. To do so, we define the distance between $i\in \mathcal{V}$ and $r$ as the minimum number of edges required for $i$ to reach $r$ by walking through a sequence of connected edges in the graph, denoted as $\mathrm{dist}(i,r)$. Since the graph is tree-structured, the shortest sequence of connected edges starting from $i$ till $r$ can be directly constructed by moving towards $r$ at each step.  Upon introducing the distance, $\vec{\mathcal{E}}$ can then be obtained as  
\begin{align}
\vec{\mathcal{E}} =\left\{[i,j]|| (i,j)\in \mathcal{E}, \mathrm{dist}(j,r)<\mathrm{dist}(i,r) \right\}.
\end{align}
 The set $\vec{\mathcal{E}}$ ensures that starting from any node $i$ and traveling over the directed graph $\vec{G}$ would eventually arrive at the root node $r$. Finally, we introduce 
 \begin{align}
 \mathrm{dist}(\vec{G},r) = \max_{i\in \mathcal{V}} \mathrm{dist}(i,r), \label{equ:dis_r}
 \end{align} 
which measures the maximum distance between any node  $i\in  \mathcal{V}$ and the root node $r$.  As will be shown in next subsection, the quantity $\mathrm{dist}(\vec{G},r)$ is related to how many iterations are required before PDMM converges to the optimal solution.  

We now explain in detail how to construct the matrices in $\mathcal{P}$ over $\vec{G}=(\mathcal{V},\vec{\mathcal{E}})$.  We start from the edges in $\vec{\mathcal{E}}$ with no preceding edges.  Suppose $[u,v]\in \vec{\mathcal{E}}$ has no preceding edges, implying that node $u$ has only one neighbour $v$ (i.e., $\mathcal{N}_u= \{v\}$). The matrix $\boldsymbol{P}_{uv}$ is then built as 
\begin{align}
\boldsymbol{P}_{uv}&=\boldsymbol{A}_{u | v}\boldsymbol{\Sigma}_u^{-1}\boldsymbol{A}_{u | v}^T,
\label{equ:optimalP_leafNode}
\end{align} 
where $\boldsymbol{\Sigma}_u$ is assumed to be nonsingular. With (\ref{equ:optimalP_leafNode}), the other matrices in $\mathcal{P}$ can then be constructed recursively along the directed edges in $\vec{\mathcal{E}}$ towards the root node $r$.  Suppose we are at the position to determine the matrix $\boldsymbol{P}_{ij}$, $[i,j]\in \vec{\mathcal{E}}$.  It can be computed by using the matrices $\{ \boldsymbol{P}_{ui} | u\in \mathcal{N}_{i}/j \}$ of preceding edges as (See Fig.~\ref{fig:prob_graph})
\begin{align}
\hspace{-4.5mm}\boldsymbol{P}_{ij}&\hspace{-0.6mm}=\hspace{-0.6mm}\boldsymbol{A}_{i | j}\hspace{-1mm}\left(\boldsymbol{\Sigma}_i\hspace{-0.6mm}+\hspace{-0.6mm} \sum_{u\in \mathcal{N}_{i}/ j} \hspace{-1mm} \boldsymbol{A}_{i | u}^T\boldsymbol{P}_{u i}^{-1}\boldsymbol{A}_{i | u}\right)^{-1}\hspace{-1mm}\boldsymbol{A}_{i | j}^T. \hspace{-2mm}
\label{equ:optimalParam}
\end{align}

We note that the above construction scheme (\ref{equ:optimalP_leafNode})-(\ref{equ:optimalParam}) implicitly assumes the constructed matrices in $\mathcal{P}$ are nonsingular, as required by (\ref{equ:x_updateSyn1}). 
We summarize the assumption below:  

\begin{assumption}
Every matrix $\boldsymbol{P}_{ij}\in \mathcal{P}$ constructed by (\ref{equ:optimalP_leafNode})-(\ref{equ:optimalParam}) is assumed to be symmetric positive definite, i.e., $\boldsymbol{P}_{ij}\succ \boldsymbol{0}$.
\label{assumption:fullRowRank}
\end{assumption}

\begin{figure}[tb]
\centering
\begin{footnotesize}
  \includegraphics[width=60mm]{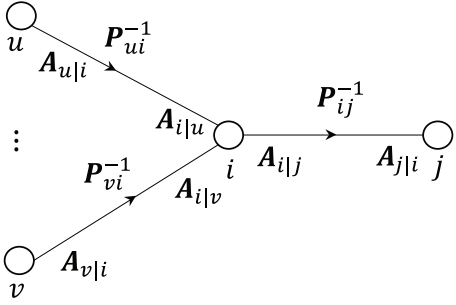}
\end{footnotesize}
\caption{\small  Recursive matrix construction over the directed graph $\vec{G}=\{\mathcal{V},\vec{\mathcal{E}}\}$. Each matrix $\boldsymbol{P}_{ij}$ has no direction, i.e., $\boldsymbol{P}_{ij}=\boldsymbol{P}_{ji}$. } \label{fig:prob_graph}
\vspace{-3.5mm}
\end{figure}


\vspace{-1mm}
\subsection{Convergence properties of PDMM}
\vspace{-1mm}
In this subsection we argue that the particular matrix selection (\ref{equ:optimalP_leafNode})-(\ref{equ:optimalParam})  leads to finite-time convergence of both synchronous and asynchronous PDMM irrespective of any initialization $\hat{\boldsymbol{x}}$ and $\{\hat{\boldsymbol{m}}_{i\rightarrow j}|, i\in \mathcal{V}, j\in \mathcal{N}_i \}$. 

In the following we first explain that with  (\ref{equ:optimalP_leafNode})-(\ref{equ:optimalParam}), information of the quadratic function $\{f_i\}$ and the equality constraints in (\ref{equ:quadOptimization}) flows towards the root node $r$ fast. The results will be summarized in a lemma below. We will then analyze the finite-time convergence of synchronous and asynchronous PDMM based on the lemma.  

\subsubsection{Effective information flow towards the root node $r$}

Consider the computation of the message $\hat{\boldsymbol{m}}_{i\rightarrow j}^{k+1}$ over a directed edge $[i,j]\in \vec{\mathcal{E}}$. It is clear from (\ref{equ:message_update}) that $\hat{\boldsymbol{m}}_{i\rightarrow j}^{k+1}$  is fully determined by the new estimate $\hat{\boldsymbol{x}}_{i}^{k+1}$ and the old message  $\hat{\boldsymbol{m}}_{j\rightarrow i}^{k}$. We note that in general, the computation of $\hat{\boldsymbol{x}}_i^{k+1}$ involves the messages $\{\hat{\boldsymbol{m}}_{u\rightarrow i}^k| u\in \mathcal{N}_i\}$ from all neighbouring nodes (see (\ref{equ:x_updateSyn1})). As a consequence, the message $\hat{\boldsymbol{m}}_{i\rightarrow j}^{k+1}$ implicitly makes use of the same messages from all neighboring nodes. We show in the following that with the particular matrix selection (\ref{equ:optimalP_leafNode})-(\ref{equ:optimalParam}), $\hat{\boldsymbol{m}}_{j\rightarrow i}^{k}$ is cancelled out in computing $\hat{\boldsymbol{m}}_{i\rightarrow j}^{k+1}$ for the considered quadratic optimization problem (\ref{equ:quadOptimization}), leading to effective information flow towards the root node $r$. 

\begin{lemma}
Let the set of matrices $\{\boldsymbol{P}_{ij}| (i,j)\in \mathcal{E}\}$ be constructed by following (\ref{equ:optimalP_leafNode})-(\ref{equ:optimalParam}) for the quadratic optimization problem (\ref{equ:quadOptimization}), where Assumption~\ref{assumption:fullRowRank} holds. By following the updating procedure (\ref{equ:x_updateSyn1}) and (\ref{equ:message_update}), the message $\hat{\boldsymbol{m}}_{i\rightarrow j}^{k+1}$ over the directed edge $[i, j]\in \vec{\mathcal{E}}$ is computed as
\begin{align}
\hspace{-2mm}\hat{\boldsymbol{m}}_{i\rightarrow j}^{k+1}\hspace{-0.5mm}
=& \boldsymbol{c}_{ij}-2\boldsymbol{A}_{i | j}  \Big(\boldsymbol{\Sigma}_i+\sum_{u\in \mathcal{N}_i}\boldsymbol{A}_{i | u}^T\boldsymbol{P}_{iu}^{-1}\boldsymbol{A}_{i | u}\Big)^{-1}  \nonumber \\
& \hspace{8mm}\cdot\Big(\boldsymbol{a}_i+\sum_{u\in \mathcal{N}_i/j}\boldsymbol{A}_{i | u}^T\boldsymbol{P}_{iu}^{-1}\hat{\boldsymbol{m}}_{u\rightarrow i}^k\Big).
\label{equ:mess_oneDirection}
\end{align}
\label{lemma:mess_oneDirection}\vspace{-3mm}
\end{lemma}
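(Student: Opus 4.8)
The plan is to compute the primal update $\hat{\boldsymbol{x}}_i^{k+1}$ in closed form for the quadratic objective and substitute it into the message recursion (\ref{equ:message_update}), then show that the incoming message $\hat{\boldsymbol{m}}_{j\rightarrow i}^k$ cancels exactly because of the construction (\ref{equ:optimalP_leafNode})--(\ref{equ:optimalParam}). First I would replace $f_i(\boldsymbol{x}_i)$ in (\ref{equ:x_updateSyn1}) by $\frac{1}{2}\boldsymbol{x}_i^T\boldsymbol{\Sigma}_i\boldsymbol{x}_i-\boldsymbol{a}_i^T\boldsymbol{x}_i$ and set the gradient of the resulting strictly convex quadratic to zero. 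Abbreviating $\boldsymbol{R}_i=\boldsymbol{\Sigma}_i+\sum_{u\in\mathcal{N}_i}\boldsymbol{A}_{i|u}^T\boldsymbol{P}_{iu}^{-1}\boldsymbol{A}_{i|u}$, this yields
\begin{align}
\hat{\boldsymbol{x}}_i^{k+1}=\boldsymbol{R}_i^{-1}\Big(\boldsymbol{a}_i+\sum_{u\in\mathcal{N}_i}\boldsymbol{A}_{i|u}^T\boldsymbol{P}_{iu}^{-1}\hat{\boldsymbol{m}}_{u\rightarrow i}^k\Big),\nonumber
\end{align}
where $\boldsymbol{R}_i$ is invertible under Assumption~\ref{assumption:fullRowRank}.

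Next I would insert this closed form into (\ref{equ:message_update}) and split the sum over $\mathcal{N}_i$ into the single term $u=j$ and the remaining terms $u\in\mathcal{N}_i/j$. The contributions carrying $\hat{\boldsymbol{m}}_{j\rightarrow i}^k$ then combine into $(\boldsymbol{I}-2\boldsymbol{A}_{i|j}\boldsymbol{R}_i^{-1}\boldsymbol{A}_{i|j}^T\boldsymbol{P}_{ij}^{-1})\hat{\boldsymbol{m}}_{j\rightarrow i}^k$, while the surviving terms are precisely the right-hand side of (\ref{equ:mess_oneDirection}). It therefore remains to establish the cancellation $\boldsymbol{I}=2\boldsymbol{A}_{i|j}\boldsymbol{R}_i^{-1}\boldsymbol{A}_{i|j}^T\boldsymbol{P}_{ij}^{-1}$, or equivalently $\boldsymbol{A}_{i|j}\boldsymbol{R}_i^{-1}\boldsymbol{A}_{i|j}^T=\frac{1}{2}\boldsymbol{P}_{ij}$.

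This matrix identity is the crux of the argument and the step I expect to be the main obstacle. Using $\boldsymbol{P}_{iu}=\boldsymbol{P}_{ui}$ and writing $\boldsymbol{S}_{ij}=\boldsymbol{\Sigma}_i+\sum_{u\in\mathcal{N}_i/j}\boldsymbol{A}_{i|u}^T\boldsymbol{P}_{iu}^{-1}\boldsymbol{A}_{i|u}$, the recursion (\ref{equ:optimalParam}) becomes $\boldsymbol{P}_{ij}=\boldsymbol{A}_{i|j}\boldsymbol{S}_{ij}^{-1}\boldsymbol{A}_{i|j}^T$, and the matrix from the first paragraph decomposes as $\boldsymbol{R}_i=\boldsymbol{S}_{ij}+\boldsymbol{A}_{i|j}^T\boldsymbol{P}_{ij}^{-1}\boldsymbol{A}_{i|j}$. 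Applying the Woodbury identity to $\boldsymbol{R}_i^{-1}$ and then using $\boldsymbol{A}_{i|j}\boldsymbol{S}_{ij}^{-1}\boldsymbol{A}_{i|j}^T=\boldsymbol{P}_{ij}$ to simplify the correction term, I obtain $\boldsymbol{A}_{i|j}\boldsymbol{R}_i^{-1}\boldsymbol{A}_{i|j}^T=\boldsymbol{P}_{ij}-\frac{1}{2}\boldsymbol{P}_{ij}=\frac{1}{2}\boldsymbol{P}_{ij}$, which is exactly the required cancellation. The leaf-node base case (\ref{equ:optimalP_leafNode}) is recovered as the special case $\mathcal{N}_i/j=\emptyset$ of this computation, so it needs no separate treatment, completing the derivation of (\ref{equ:mess_oneDirection}).
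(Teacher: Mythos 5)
Your proposal is correct and follows essentially the same route as the paper's proof: compute the closed-form quadratic update for $\hat{\boldsymbol{x}}_i^{k+1}$, substitute it into (\ref{equ:message_update}), and show that the coefficient of $\hat{\boldsymbol{m}}_{j\rightarrow i}^k$ vanishes by applying the Woodbury identity together with $\boldsymbol{P}_{ij}=\boldsymbol{A}_{i|j}\boldsymbol{C}_{i/j}^{-1}\boldsymbol{A}_{i|j}^T$ (your $\boldsymbol{S}_{ij}$ is the paper's $\boldsymbol{C}_{i/j}$). The key cancellation $\boldsymbol{A}_{i|j}\boldsymbol{R}_i^{-1}\boldsymbol{A}_{i|j}^T=\tfrac{1}{2}\boldsymbol{P}_{ij}$ is exactly the identity the paper establishes in showing $\boldsymbol{B}_{i\rightarrow j}=\boldsymbol{0}$.
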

\begin{proof}
See Appendix~\ref{appendix:proof_lemma_mess_oneDirection} for the proof. 
\end{proof}

Lemma~\ref{lemma:mess_oneDirection} indicates that the computation of $\hat{\boldsymbol{m}}_{i\rightarrow j}^{k+1}$, $[i,j]\in \vec{\mathcal{E}}$, only involves the messages $\{\hat{\boldsymbol{m}}_{u\rightarrow i}^k| u\in \mathcal{N}_i/j \}$ from the preceding directed edges. From a high-level point of view, the message $\hat{\boldsymbol{m}}_{j\rightarrow i}^{k}$ is already available at node $j$ at time step $k$. Therefore, it seems redundant for the new message $\hat{\boldsymbol{m}}_{i\rightarrow j}^{k+1}$ to include any information of  $\hat{\boldsymbol{m}}_{j\rightarrow i}^{k}$ in computing $\hat{\boldsymbol{x}}_j^{k+2}$ at time step $k+1$. The particular form (\ref{equ:mess_oneDirection}) for $\hat{\boldsymbol{m}}_{i\rightarrow j}^{k+1}$ brings only new information to node $j$, which may accelerate the information flow over the graph. By running the iterates of either synchronous or asynchronous PDMM, global information of all the quadratic functions $\{f_i\}$ and the equality constraints in  (\ref{equ:quadOptimization}) would arrive at the root node $r$ in finite number of steps. As will be shown later the root node $r$ reaches optimality first, followed by other nodes. 

\subsubsection{Synchronous PDMM} We now consider synchronous PDMM where all the estimates $\hat{\boldsymbol{x}}$ and $\{ \hat{\boldsymbol{m}}_{i\rightarrow j} | i\in \mathcal{V}, j\in \mathcal{N}_i \}$ are updated simultaneously per iteration.  We have the following results: 

\begin{theorem}
Let the set of matrices $\{\boldsymbol{P}_{ij}| (i,j)\in \mathcal{E}\}$ be constructed by using (\ref{equ:optimalP_leafNode})-(\ref{equ:optimalParam}) for the quadratic optimization problem (\ref{equ:quadOptimization}), where Assumption~\ref{assumption:fullRowRank} holds. By following synchronous PDMM, it takes $\mathrm{dist}(\vec{G},r)+1$ iterations for the estimate $\hat{\boldsymbol{x}}_r$ to reach its optimal solution $\boldsymbol{x}_r^{\star}$.  The estimates $\{\hat{\boldsymbol{x}}_u,u\neq r\}$ of all the other nodes in $\mathcal{V}$ take at most $2\, \mathrm{dist}(\vec{G},r)+1$ iterations to arrive at their individual optimal solutions $\{{\boldsymbol{x}}_u^{\star},u\neq r\}$. 
\label{theorem:tree_synPDMM}
\end{theorem}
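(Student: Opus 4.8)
The plan is to track how the messages stabilize in finite time, first along the directed edges toward the root (a \emph{forward pass}) and then along the reverse edges away from the root (a \emph{backward pass}), and to identify the stabilized primal estimates with the optimum by invoking Theorem~\ref{theorem:syn}. Write $D=\mathrm{dist}(\vec{G},r)$. For a directed edge $[i,j]\in\vec{\mathcal{E}}$, let $T_{ij}$ denote the subtree obtained by deleting the undirected edge $(i,j)$ and keeping the component containing $i$, and let $h(i,j)$ be its height, i.e. the largest distance from $i$ to a node of $T_{ij}$. The crucial structural fact, supplied by Lemma~\ref{lemma:mess_oneDirection}, is that the forward message $\hat{\boldsymbol{m}}_{i\to j}^{k+1}$ in (\ref{equ:mess_oneDirection}) depends only on the preceding forward messages $\{\hat{\boldsymbol{m}}_{u\to i}^{k}\mid u\in\mathcal{N}_i/j\}$; the reverse message $\hat{\boldsymbol{m}}_{j\to i}^{k}$ has been cancelled. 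This severs the feedback loop and lets information propagate strictly inward.

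First I would establish the forward pass by induction on $h(i,j)$. For a leaf edge ($\mathcal{N}_i/j=\emptyset$) the sum in (\ref{equ:mess_oneDirection}) is empty, so $\hat{\boldsymbol{m}}_{i\to j}^{k+1}$ equals a fixed constant determined only by the local data $(\boldsymbol{\Sigma}_i,\boldsymbol{a}_i,\boldsymbol{A}_{i|j},\boldsymbol{P}_{ij})$ and hence attains its final value for every $k+1\ge 1=h(i,j)+1$. For the inductive step, every preceding edge $[u,i]$ satisfies $h(u,i)\le h(i,j)-1$, so by hypothesis each $\hat{\boldsymbol{m}}_{u\to i}^{k}$ is already at its final value for $k\ge h(i,j)$; feeding these into (\ref{equ:mess_oneDirection}) shows $\hat{\boldsymbol{m}}_{i\to j}^{k+1}$ is final for all $k+1\ge h(i,j)+1$. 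Since each neighbour $u$ of the root has $h(u,r)\le D-1$, all messages entering $r$ are final for $k\ge D$, and the update (\ref{equ:x_updateSyn1}) fixes $\hat{\boldsymbol{x}}_r^{k+1}$ for $k+1\ge D+1$. As the forward subsystem is deterministic and remains at its fixed point thereafter, $\hat{\boldsymbol{x}}_r^{k}$ is constant for $k\ge D+1$, so its limit exists and Theorem~\ref{theorem:syn} identifies that limit as $\boldsymbol{x}_r^{\star}$. Hence $\hat{\boldsymbol{x}}_r$ reaches $\boldsymbol{x}_r^{\star}$ in $D+1$ iterations.

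Next I would run the backward pass. Once $\hat{\boldsymbol{x}}_r$ is fixed, the reverse-message update (\ref{equ:message_update}), $\hat{\boldsymbol{m}}_{r\to u}^{k+1}=\hat{\boldsymbol{m}}_{u\to r}^{k}+(\boldsymbol{c}_{ru}-2\boldsymbol{A}_{r|u}\hat{\boldsymbol{x}}_r^{k+1})$, is final once its inputs are, i.e. for $k+1\ge D+1$. Propagating along any root-to-leaf path $r=p_0,p_1,\dots,p_\ell$, I would show by induction that $p_\ell$ reaches optimality at iteration $T_{p_\ell}=D+1+\ell$: the binding input for computing $\hat{\boldsymbol{x}}_{p_\ell}^{k+1}$ via (\ref{equ:x_updateSyn1}) is the reverse message from its parent $p_{\ell-1}$, which is final for $k\ge T_{p_{\ell-1}}$, forcing $T_{p_\ell}=T_{p_{\ell-1}}+1$, while the remaining (forward) inputs into $p_\ell$ stabilized much earlier. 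Since the deepest node sits at distance $D$, every node is optimal by iteration $D+1+D=2D+1$, the whole configuration of primals and messages is then a global fixed point, and Theorem~\ref{theorem:syn} again certifies these stabilized values as the $\{\boldsymbol{x}_u^{\star}\}$.

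The main obstacle, and the part demanding the most care, is the bookkeeping that makes the counts $D+1$ and $2D+1$ tight rather than merely loose bounds. Concretely I must verify that in the backward phase the reverse message from the parent is always the binding constraint: the subtree below a node $v$ at distance $d_v$ has height at most $D-d_v$, so its forward inputs are final by iteration at most $D-d_v$, which never exceeds the reverse-message threshold $T_{\mathrm{parent}}=D+d_v$; this inequality is what guarantees $T_v=D+1+d_v$. I would also keep precise track of the one-step offset between $\hat{\boldsymbol{m}}^{k}$ and $\hat{\boldsymbol{x}}^{k+1}$ in (\ref{equ:x_updateSyn1}) and (\ref{equ:message_update}), and confirm that once reached no quantity is later perturbed, so that the finite stabilization genuinely produces convergence and Theorem~\ref{theorem:syn} applies.
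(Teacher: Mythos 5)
Your proposal is correct and follows essentially the same route as the paper: use Lemma~\ref{lemma:mess_oneDirection} to show by induction along the tree that the forward messages stabilize within $\mathrm{dist}(\vec{G},r)$ iterations so that $\hat{\boldsymbol{x}}_r$ is fixed at iteration $\mathrm{dist}(\vec{G},r)+1$ and identified as optimal via Theorem~\ref{theorem:syn}, then propagate the reverse messages outward to fix the remaining nodes by iteration $2\,\mathrm{dist}(\vec{G},r)+1$. Your treatment is in fact more explicit than the paper's (which omits the backward-pass bookkeeping as ``similar''), but the underlying argument is the same.
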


\begin{proof}
We first show that the estimate $\hat{\boldsymbol{x}}_r$ converges in $\mathrm{dist}(\vec{G},r)+1$ iterations. To do so,  we show that the messages $\{\hat{\boldsymbol{m}}_{i\rightarrow j} | [i,j]\in \vec{\mathcal{E}} \}$ over all the directed edges in $\vec{\mathcal{E}}$ converge to their fixed points in $\mathrm{dist}(\vec{G},r)$ iterations. From Lemma~\ref{lemma:mess_oneDirection}, it is clear that if a directed edge $[u,v]$ has no preceding directed edges (i.e., $\mathcal{N}_u=\{v\}$), the associated message $\hat{\boldsymbol{m}}_{u\rightarrow v}$ only needs one iteration to converge. Suppose $[u,v]$ has  preceding directed edges. In this situation, if the messages $\{\hat{\boldsymbol{m}}_{w\rightarrow u}| w\in \mathcal{N}_u /v \}$ of its preceding edges have already converged, the message $\hat{\boldsymbol{m}}_{u\rightarrow v}$ also needs only one iteration to converge (see (\ref{equ:mess_oneDirection})).  As $\vec{G}$ is an acyclic tree-structured graph, it is immediate from Theorem~\ref{theorem:syn} that the estimate $\hat{\boldsymbol{x}}_r^{k}=\boldsymbol{x}_r^{\star}$ when $k=\mathrm{dist}(\vec{G},r)+1$. 

In a similar manner, one can show that the estimates $\{\hat{\boldsymbol{x}}_u,u\neq r\}$ of other nodes converge in $2\, \mathrm{dist}(\vec{G},r)+1$ iterations. The key step is to show that the messages $\{\hat{\boldsymbol{m}}_{j\rightarrow i} | [i,j]\in \vec{\mathcal{E}} \}$ over the \emph{reverse direction} of all the directed edges in $\vec{\mathcal{E}}$ converge to their fixed points in $2 \,\mathrm{dist}(\vec{G},r)+1$ iterations by using (\ref{equ:x_updateSyn1}) and (\ref{equ:message_update}). We omit the argument as it is similar to the analysis above.  
\end{proof}

\begin{figure}[tb]
\centering
\begin{footnotesize}
  \includegraphics[width=70mm]{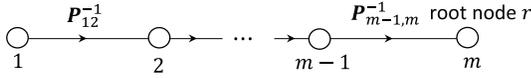}
\end{footnotesize}
\caption{\small  A directed chain graph with node $m$ being the root node $r$.  The set $\vec{\mathcal{E}}=\{[i,i+1] | i=1,\ldots, m-1\}$. } \label{fig:prob_graph_chain}
\end{figure}

\subsubsection{Asynchronous PDMM over a chain graph} \label{subsub:PDMM_Asyn_chain}
We now consider a chain graph for asynchronous PDMM  and postpone the analysis for a general tree-structured graph later on.  The analysis for a chain graph is relatively easy to understand and has a close relationship to the updating procedure of the Kalman filter (see Section~\ref{sec:RelationKalmanFilter}).   

\begin{table}[t!]
\caption{Asynchronous PDMM: Parameter updating over the chain graph in Fig.~\ref{fig:prob_graph_chain}. The notation $\boldsymbol{m}_{i\rightarrow j}^{\star}$ represents a fixed (or converged) message from node $i$ to $j$. }
\label{table:nodeSelChain} \vspace{-2mm}
\centering
\begin{tabular}{|l|}
\hline 
\hspace{0mm} Init.: Set $\{\boldsymbol{P}_{i,i+1}| (i,i+1)\in \mathcal{E}\}$ by using (\ref{equ:optimalP_leafNode})-(\ref{equ:optimalParam})   \\
  \hline
     \hspace{0mm} Forward computation: \\
   \hspace {3mm}   ${\boldsymbol{m}}_{1\rightarrow 2}^{\star}\hspace{-0.5mm}
\hspace{-0.7mm}=\hspace{-0.7mm}\boldsymbol{c}_{12}\hspace{-0.7mm}-\hspace{-0.7mm}2\boldsymbol{A}_{1 | 2}  \left(\boldsymbol{\Sigma}_1\hspace{-0.7mm}+\boldsymbol{A}_{1 | 2}^T\boldsymbol{P}_{12}^{-1}\boldsymbol{A}_{12}\right)^{-1}\boldsymbol{a}_1$  \\     
     \hspace{1mm} for node $i=2,\ldots, m-1$ do \\
   \hspace {3mm}   ${\boldsymbol{m}}_{i\rightarrow i+1}^{\star}\hspace{-0.5mm}
\hspace{-0.7mm}=\hspace{-0.7mm}\boldsymbol{c}_{i,i+1}\hspace{-0.7mm}-\hspace{-0.7mm}2\boldsymbol{A}_{i | i+1}  \Big(\boldsymbol{\Sigma}_i\hspace{-0.7mm}+\hspace{-0.7mm}\underset{{u\in \mathcal{N}_i}}{\sum}\hspace{-0.7mm}\boldsymbol{A}_{i | u}^T\boldsymbol{P}_{iu}^{-1}\boldsymbol{A}_{i | u}\Big)^{-1} $ \\
   $\hspace{20mm}\cdot\Big(\boldsymbol{a}_i+\boldsymbol{A}_{i | i-1}^T\boldsymbol{P}_{i,i-1}^{-1}{\boldsymbol{m}}_{i-1\rightarrow i}^{\star}\Big),$   $ [i,j]\in \vec{\mathcal{E}}$\\
   \hspace{1mm} end for \\
   \hline
     \hspace{0mm} Backward computation: \\   
    \hspace{1mm} for node $i=m,m-1,\ldots, 1$ do \\
   \hspace{3mm}  ${\boldsymbol{x}}_i^{\star} \hspace{-0.5mm}=\hspace{-0.5mm}\arg\underset{\boldsymbol{x}_i}{\min}\hspace{0mm}\Big[f_i(\boldsymbol{x}_i)\hspace{-0.5mm} +\underset{{j\in \mathcal{N}_i}}{\sum}\frac{1}{2}\| \boldsymbol{A}_{i | j}\boldsymbol{x}_i\hspace{-0.7mm} -{\boldsymbol{m}}_{j\rightarrow i}^{\star}\|_{\boldsymbol{P}_{ij}^{-1}}^2\Big]$ \\
   \hspace{3mm} ${\boldsymbol{m}}_{i\rightarrow i-1}^{\star}={\boldsymbol{m}}_{i-1\rightarrow i}^{\star}+(\boldsymbol{c}_{i,i-1}-2\boldsymbol{A}_{i | i-1}{\boldsymbol{x}}_i^{\star}),$  $i>1$ \\
	 \hspace{1mm} end for \\
  \hline
\end{tabular}
\vspace{-3.5mm}
\end{table}

 Before presenting the convergence results, we first describe how to select one node per iteration for parameter updating to maximize performance. To do so, we index the nodes of the chain graph from one side to the other side starting from $i=1$ until $i=m$ (see Fig.~\ref{fig:prob_graph_chain} for illustration). The last node $m$ is taken as the root node $r$.  Correspondingly, the directed edge set $\vec{\mathcal{E}}=\{[i,i+1]| i=1,\ldots, m-1\}$. The quantity $\mathrm{dist}(\vec{G},r)=m-1$. 
 
 The nodes will be selected for parameter-updating in two stages. In the first stage, we select one node per iteration from $i=1$ to $i=m-1$  to perform a \emph{forward computation} towards the root node $r$. In the second stage, we select one node per iteration in reverse order from  $i=m$ to $i=1$ to perform a \emph{backward computation}. In total, the forward and backward computations account for $2m-1=2 \, \mathrm{dist}(\vec{G},r)+1$ iterations.  We have the following convergence results. 

\begin{theorem} Consider the quadratic optimization problem (\ref{equ:quadOptimization}) over the chain graph in Fig.~\ref{fig:prob_graph_chain}. Let the set of matrices $\{\boldsymbol{P}_{i,i+1}| (i,i+1)\in \mathcal{E}\}$ be constructed by using (\ref{equ:optimalP_leafNode})-(\ref{equ:optimalParam}), where Assumption~\ref{assumption:fullRowRank} holds. By following the updating procedure in Table~\ref{table:nodeSelChain},  the estimate $\hat{\boldsymbol{x}}_i$, $i=m,\ldots,1$, reaches its optimal solution $\boldsymbol{x}_i^{\star}$ after node $i$ finishes updating its parameters in the backward computation stage. 
\label{theorem:chain_asynPDMM}
\end{theorem}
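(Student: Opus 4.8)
The plan is to track which messages have reached their fixed points as the asynchronous schedule of Table~\ref{table:nodeSelChain} proceeds, and then to convert the resulting stationarity into optimality via Theorem~\ref{theorem:syn}. The argument splits into a forward induction establishing convergence of the forward messages, a reverse induction establishing convergence of the backward messages together with the primals, and a final appeal to the fixed-point guarantee of Theorem~\ref{theorem:syn}. Throughout, the decisive structural fact is Lemma~\ref{lemma:mess_oneDirection}: on the chain graph the directed message $\hat{\boldsymbol{m}}_{i\rightarrow i+1}^{k+1}$ depends only on the incoming message $\hat{\boldsymbol{m}}_{i-1\rightarrow i}^k$ of its single preceding edge and is independent of the reverse message $\hat{\boldsymbol{m}}_{i+1\rightarrow i}^k$, even though the intermediate quantity $\hat{\boldsymbol{x}}_i^{k+1}$ still depends on the latter.

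First I would prove, by induction on $i=1,\ldots,m-1$, that after node $i$ is activated in the forward stage the forward message already equals its fixed point, $\hat{\boldsymbol{m}}_{i\rightarrow i+1}=\boldsymbol{m}_{i\rightarrow i+1}^{\star}$. For the base case $i=1$, node $1$ has $\mathcal{N}_1=\{2\}$, so the neighbour sum over $\mathcal{N}_1/2$ in the second factor of (\ref{equ:mess_oneDirection}) is empty and the message collapses to the closed form listed in the first line of Table~\ref{table:nodeSelChain}; no later activation can alter it. For the inductive step, Lemma~\ref{lemma:mess_oneDirection} shows $\hat{\boldsymbol{m}}_{i\rightarrow i+1}$ is a fixed function of $\hat{\boldsymbol{m}}_{i-1\rightarrow i}$ alone, which by hypothesis already sits at $\boldsymbol{m}_{i-1\rightarrow i}^{\star}$, so node $i$'s activation produces $\boldsymbol{m}_{i\rightarrow i+1}^{\star}$. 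The point worth stressing is that the spurious backward messages written during the forward stage never feed back into these forward messages, so they cannot disturb the induction. At the close of the forward stage every forward message is therefore frozen at its fixed point.

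Next I would run a reverse induction on $i=m,m-1,\ldots,1$ over the backward stage, with inductive claim: when node $i$ finishes its backward update, both messages incident on $i$ are at their fixed points, whence $\hat{\boldsymbol{x}}_i$ is determined and the outgoing backward message $\boldsymbol{m}_{i\rightarrow i-1}^{\star}$ is produced. The base case is node $m$, whose only neighbour is $m-1$; its sole incoming message $\boldsymbol{m}_{m-1\rightarrow m}^{\star}$ is already fixed from the forward stage, so its primal update (\ref{equ:x_updateSyn1}) and the ensuing message update (\ref{equ:message_update}) are fully determined. For the step, when node $i$ is activated its forward incoming message $\boldsymbol{m}_{i-1\rightarrow i}^{\star}$ is fixed from the forward stage and its backward incoming message $\boldsymbol{m}_{i+1\rightarrow i}^{\star}$ is fixed by the reverse hypothesis, since node $i+1$ was activated earlier in the backward pass; hence $\hat{\boldsymbol{x}}_i$ is computed from fixed-point messages and (\ref{equ:message_update}) then yields $\boldsymbol{m}_{i\rightarrow i-1}^{\star}$.

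It remains to identify the value $\hat{\boldsymbol{x}}_i$ obtained from these converged messages with the optimum $\boldsymbol{x}_i^{\star}$. Once all forward and backward messages rest at their fixed points, the full configuration is stationary under the synchronous PDMM map: re-running (\ref{equ:x_updateSyn1})--(\ref{equ:message_update}) reproduces exactly the same messages and primals. The eventually-constant trajectory therefore converges trivially, and Theorem~\ref{theorem:syn} certifies that each limiting primal equals $\boldsymbol{x}_i^{\star}$. Because node $i$ evaluates (\ref{equ:x_updateSyn1}) from precisely these fixed-point incoming messages at its backward activation, the value it outputs is that limit, namely $\boldsymbol{x}_i^{\star}$, which is the claim. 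I expect the main obstacle to be the careful bookkeeping of the asynchronous schedule --- in particular confirming that at node $i$'s backward activation the successor's reverse message is genuinely frozen at $\boldsymbol{m}_{i+1\rightarrow i}^{\star}$ rather than a stale pre-convergence value --- since the deep cancellation that drives the whole mechanism is already packaged in Lemma~\ref{lemma:mess_oneDirection}.
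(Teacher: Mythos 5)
Your proposal is correct and follows essentially the same route as the paper: the paper's (much terser) proof likewise uses Lemma~\ref{lemma:mess_oneDirection} to freeze the forward messages during the forward stage, then argues recursively from the root $i=m$ down to $i=1$ that each backward activation sees only fixed-point incoming messages, with optimality of the resulting primals certified via Theorem~\ref{theorem:syn} exactly as in the proof of Theorem~\ref{theorem:tree_synPDMM}. Your write-up simply makes explicit the two inductions and the stationarity argument that the paper leaves implicit.
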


\begin{proof}
The convergence analysis is similar to the proof for Theorem~\ref{theorem:tree_synPDMM}.  Firstly we conclude from Lemma~\ref{lemma:mess_oneDirection} that the forward computation in Table~\ref{table:nodeSelChain} leads to the fixed messages $\{\boldsymbol{m}_{i\rightarrow i+1}^{\star}| [i,i+1]\in \vec{\mathcal{E}} \}$ over all the directed edges in $\vec{\mathcal{E}}$.   

Next we consider the backward computation from node $i=m$ to $i=1$ in Table~\ref{table:nodeSelChain}. The computation at the root node $i=m$ would lead to the optimal estimate $\hat{\boldsymbol{x}}_m=\boldsymbol{x}_m^{\star}$ and fixed (or converged) message $\hat{\boldsymbol{m}}_{m\rightarrow m-1}={\boldsymbol{m}}_{m\rightarrow m-1}^{\star}$ because it uses the fixed message $\boldsymbol{m}_{m-1\rightarrow m}^{\star}$ from neighbour $m-1$. Similarly, the computation at other nodes $i=m-1,\ldots, 1$ would also lead to the optimal estimate $\hat{\boldsymbol{x}}_i=\boldsymbol{x}_i^{\star}$ and fixed message $\hat{\boldsymbol{m}}_{i\rightarrow i-1}={\boldsymbol{m}}_{i\rightarrow i-1}^{\star}$. The proof is complete.
\end{proof}

We note that for the updating scheme in Table~\ref{table:nodeSelChain}, the estimate $\hat{\boldsymbol{x}}_m$ of the root node reaches its optimal solution ${\boldsymbol{x}}_m^{\star}$  first, which is similar to the convergence results of  synchronous PDMM (See Theorem~\ref{theorem:tree_synPDMM}). If one is only interested in the optimal solution ${\boldsymbol{x}}_m^{\star}$, the backward computation in Table~\ref{table:nodeSelChain} can be greatly reduced by only performing computation at the root node $i=m$.  

 \subsubsection{Asynchronous PDMM over a tree-structured graph} Similarly to the case of a chain graph above, the key step is to properly select one node per iteration for parameter updating.  As the considered graph is more general, it may take more iterations than that for the chain graph to converge. 
 
The updating procedure for a tree-structured graph is presented in Table~\ref{table:nodeSelection}. Similarly to Table~\ref{table:nodeSelChain}, it also involves the forward and backward computations. When performing forward computation in Table~\ref{table:nodeSelection}, we select a directed edge $[i,j]\in \vec{\mathcal{E}}$ per iteration to compute the associated message $\boldsymbol{m}_{i\rightarrow j}^{\star}$, which is equivalent to the selection of node $i$. The convergence results are summarized below:

\begin{table}[t!]
\caption{Asynchronous PDMM: Parameter updating over a tree-structured graph. The notation $\boldsymbol{m}_{i\rightarrow j}^{\star}$ represents a fixed (or converged) message from node $i$ to $j$. }
\label{table:nodeSelection} \vspace{-2mm}
\centering
\begin{tabular}{|l|}
\hline 
\hspace{0mm} Init.: Set $\{\boldsymbol{P}_{i,j}| (i,j)\in \mathcal{E}\}$ by using (\ref{equ:optimalP_leafNode})-(\ref{equ:optimalParam})   \\
  \hline
     Forward computation: \\
     \hspace{1mm} Select $[i,j]\in \vec{\mathcal{E}}$ when $\{\hat{\boldsymbol{m}}_{u\rightarrow i} = {\boldsymbol{m}}_{u\rightarrow i}^{\star} | u \hspace{-0.6mm}\in \hspace{-0.6mm} \mathcal{N}_i/ j\}$ \\
   \hspace {5mm}    $\hspace{-2mm}{\boldsymbol{m}}_{i\rightarrow j}^{\star}\hspace{-0.5mm}
=\boldsymbol{c}_{ij}-2\boldsymbol{A}_{i | j}  \Big(\boldsymbol{\Sigma}_i+\underset{{u\in \mathcal{N}_i}}{\sum}\boldsymbol{A}_{i | u}^T\boldsymbol{P}_{iu}^{-1}\boldsymbol{A}_{i | u}\Big)^{-1} $ \\
   $\hspace{27mm}\cdot\Big(\boldsymbol{a}_i+\underset{{u\in \mathcal{N}_i/j}}{\sum}\boldsymbol{A}_{i | u}^T\boldsymbol{P}_{iu}^{-1}{\boldsymbol{m}}_{u\rightarrow i}^{\star}\Big)$  \\
 \hline  
   Backward Computation: \\
       \hspace{1mm} Select node $i$ when $\{\hat{\boldsymbol{m}}_{u\rightarrow i} = {\boldsymbol{m}}_{u\rightarrow i}^{\star} | u \hspace{-0.6mm}\in \hspace{-0.6mm} \mathcal{N}_i\}$ \\ 
   \hspace{2mm}  ${\boldsymbol{x}}_i^{\star} \hspace{-0.5mm}=\hspace{-0.5mm}\arg\underset{\boldsymbol{x}_i}{\min}\hspace{0mm}\Big[f_i(\boldsymbol{x}_i)\hspace{-0.5mm} +\underset{{j\in \mathcal{N}_i}}{\sum}\frac{1}{2}\| \boldsymbol{A}_{i | j}\boldsymbol{x}_i\hspace{-0.5mm} -{\boldsymbol{m}}_{j\rightarrow i}^{\star}\|_{\boldsymbol{P}_{ij}^{-1}}^2\Big]$ \\
   \hspace{2mm} ${\boldsymbol{m}}_{i\rightarrow u}^{\star}={\boldsymbol{m}}_{u\rightarrow i}^{\star}+(\boldsymbol{c}_{iu}-2\boldsymbol{A}_{i | u}{\boldsymbol{x}}_i^{\star}),$  $[u,i]\in \vec{\mathcal{E}}$ \\
  \hline
\end{tabular}
\vspace{-3.5mm}
\end{table}

\begin{theorem}
Let the set of matrices $\{\boldsymbol{P}_{ij}| (i,j)\in \mathcal{E}\}$ be constructed by using (\ref{equ:optimalP_leafNode})-(\ref{equ:optimalParam}) for the quadratic optimization problem (\ref{equ:quadOptimization}), where Assumption~\ref{assumption:fullRowRank} holds.  By following the updating procedure of  Table~\ref{table:nodeSelection}, the estimate $\hat{\boldsymbol{x}}$ converges to $\boldsymbol{x}^{\star}$ in finite iterations.   
\label{theorem:tree_asynPDMM}
\end{theorem}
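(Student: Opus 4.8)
The plan is to mirror the two-stage (forward/backward) argument already used for the synchronous case in Theorem~\ref{theorem:tree_synPDMM} and for the chain in Theorem~\ref{theorem:chain_asynPDMM}, now adapted to a general tree. The two facts I would lean on are Lemma~\ref{lemma:mess_oneDirection}, which shows that a forward message $\hat{\boldsymbol{m}}_{i\rightarrow j}^{k+1}$ over $[i,j]\in\vec{\mathcal{E}}$ depends only on the incoming messages $\{\hat{\boldsymbol{m}}_{u\rightarrow i}^{k}|u\in\mathcal{N}_i/j\}$ of its preceding edges (the reverse message $\hat{\boldsymbol{m}}_{j\rightarrow i}^{k}$ is cancelled), and Theorem~\ref{theorem:syn}, which identifies any fixed point of the iteration with the optimal solution $\boldsymbol{x}^{\star}$. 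First I would record the structural observation that $\vec{\mathcal{E}}$ carries a partial order induced by $\mathrm{dist}(\cdot,r)$: every directed edge $[i,j]$ satisfies $\mathrm{dist}(j,r)<\mathrm{dist}(i,r)$, and the preceding edges of $[i,j]$ are exactly the $[u,i]$ with $u\in\mathcal{N}_i/j$, whose tails lie one step farther from $r$. Because $\vec{G}$ is acyclic, processing edges in order of decreasing $\mathrm{dist}(i,r)$ of the tail node $i$ gives a valid topological schedule in which an edge is treated only after all of its preceding edges have been, and the node-selection condition in the forward block of Table~\ref{table:nodeSelection} is met.

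For the forward pass I would argue by induction along this topological order. A leaf edge $[u,v]$ (with $\mathcal{N}_u=\{v\}$) has no preceding edges, so by Lemma~\ref{lemma:mess_oneDirection} its message attains its fixed value $\boldsymbol{m}_{u\rightarrow v}^{\star}$ in the single step it is selected. For an interior edge $[i,j]$, once all preceding messages $\{\boldsymbol{m}_{u\rightarrow i}^{\star}|u\in\mathcal{N}_i/j\}$ are fixed, one more application of Lemma~\ref{lemma:mess_oneDirection} produces $\boldsymbol{m}_{i\rightarrow j}^{\star}$ independently of the not-yet-converged reverse message $\hat{\boldsymbol{m}}_{j\rightarrow i}$, so that edge converges in the step it is selected as well. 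Since $|\vec{\mathcal{E}}|=m-1<\infty$, the forward pass terminates after finitely many iterations with every forward message fixed.

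For the backward pass I would exploit that each node's neighbours split into its unique parent edge toward $r$ and its preceding (child) edges in $\vec{\mathcal{E}}$. At the root $r$ all incident edges point inward, so on selection its incoming messages are already the fixed forward messages, and the update yields $\boldsymbol{x}_r^{\star}$ together with fixed backward messages $\{\boldsymbol{m}_{r\rightarrow u}^{\star}\}$ to its children. Proceeding outward in order of increasing $\mathrm{dist}(i,r)$, when node $i$ is selected its parent has already emitted the fixed backward message and its children's forward messages are fixed from the first pass, so the condition $\{\hat{\boldsymbol{m}}_{u\rightarrow i}=\boldsymbol{m}_{u\rightarrow i}^{\star}|u\in\mathcal{N}_i\}$ holds and the update yields $\boldsymbol{x}_i^{\star}$ and fixed backward messages. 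With $m<\infty$ nodes this pass also terminates in finitely many iterations.

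Finally I would note that after both passes every message and every $\hat{\boldsymbol{x}}_i$ has become constant; re-selecting any node reproduces the same values through (\ref{equ:x_updateSyn1}) and (\ref{equ:message_update}), so the configuration is a genuine fixed point of the PDMM iteration. Invoking Theorem~\ref{theorem:syn} then gives $\hat{\boldsymbol{x}}_i=\boldsymbol{x}_i^{\star}$ for all $i\in\mathcal{V}$, i.e. $\hat{\boldsymbol{x}}=\boldsymbol{x}^{\star}$, reached in finitely many iterations. I expect the main obstacle to be the scheduling argument rather than any computation: one must check that the decreasing-distance (forward) and increasing-distance (backward) orderings are always realizable, i.e. that the selection conditions in Table~\ref{table:nodeSelection} are eventually satisfied for every edge and node. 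This rests entirely on $\vec{G}$ being an acyclic tree with a unique directed path from each node to $r$; once that ordering is established, Lemma~\ref{lemma:mess_oneDirection} makes each individual update step routine.
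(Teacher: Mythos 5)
Your proposal is correct and follows essentially the same route as the paper, whose own proof of this theorem is simply a pointer to the argument for Theorem~\ref{theorem:chain_asynPDMM}: a forward pass in topological order fixing the messages via Lemma~\ref{lemma:mess_oneDirection}, followed by a backward pass outward from the root, with the fixed point identified as $\boldsymbol{x}^{\star}$. Your write-up merely makes explicit the scheduling and induction details that the paper leaves implicit.
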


\begin{proof}
The proof is similar to that for Theorem~\ref{theorem:chain_asynPDMM}. 
\end{proof}

Besides the updating procedure of Table~\ref{table:nodeSelection},  one can also select the nodes in a different manner for parameter updating. For instance, the nodes can be selected either randomly or periodically in a predefined order. It is not difficult to show that the algorithm would still converge in finite iterations as long as every node is selected once in a while after reasonable iterations. However, it may take more iterations to converge than that of Table~\ref{table:nodeSelection}. 

\begin{remark}
The analysis in this section suggests that when using PDMM to minimize general decomposable convex functions, the matrix set $\mathcal{P}$ should be set based on the Hessian matrices of the objective functions if they exist for fast convergence speed. 
\end{remark}

 

\section{Relation to Kalman Filter}
\label{sec:RelationKalmanFilter}

In this section we first briefly present the estimation problem over a statistical linear state-space model, where the notations of the state-space model are adopted from \cite{KailathBook} with slight modification. We will show that the estimation problem can be reformulated as a maximum log-likelihood (ML) problem.  We then describe the Kalman filter for solving the estimation problem. After that, we demonstrate how to apply the results of PDMM in Section~\ref{sec:finiteConvergence} to address the ML problem. We will show that the update expressions of PDMM and the Kalman filter are equivalent using algebra. 
\subsection{Estimation over a statistical linear state-space model}

Suppose the random processes $\{\boldsymbol{z}_l | l\geq 0\}$ and $\{\boldsymbol{y}_l | l \geq 0 \}$ follow a statistical linear state-space model 
\begin{align}
\hspace{-10mm} \boldsymbol{z}_{l+1} = \boldsymbol{F}_l\boldsymbol{z}_l + \boldsymbol{G}_l\boldsymbol{u}_l & \hspace{10mm} l \geq 0 \label{equ:stateSpace_z} \\
\hspace{-10mm} \boldsymbol{y}_l  = \boldsymbol{H}_l\boldsymbol{z}_l + \boldsymbol{v}_l & \hspace{10mm}  l \geq 0,
\label{equ:stateSpace_y}
\end{align}
where the matrices $\left\{\left[\boldsymbol{F}_l\textrm{ } \boldsymbol{G}_l\right] | l\geq 0\right\}$ are assumed to be of full row-rank, and the processes $\{\boldsymbol{v}_l | l\geq 0\}$ and $\{\boldsymbol{u}_l | l\geq 0\}$ are assumed to be $q \times 1$ and $r\times 1$ vector-valued zero-mean white Gaussian processes, with 
\begin{align}
\hspace{-3.5mm} E \left(\left[\begin{array}{c} \boldsymbol{u}_l \\ \boldsymbol{v}_l \end{array}\right] \left[\begin{array}{cc} \boldsymbol{u}_j^T & \boldsymbol{v}_j^T \end{array}\right]  \right) &\hspace{-1mm}= \hspace{-1mm}\left[\begin{array}{cc} \boldsymbol{Q}_l \delta_{lj} & \boldsymbol{0} \\ \boldsymbol{0} &\boldsymbol{R}_l\delta_{lj}  \end{array}\right]  \;\;\;   l\geq 0,\hspace{-2mm} \label{equ:cov}
\end{align}
where $E(\cdot)$ denotes expectation,  $\delta_{lj}=1$ when $l=j$, and $\delta_{lj}=0$ otherwise. We assume both $\boldsymbol{u}_l$ and $\boldsymbol{v}_l$ are non-degenerate Gaussian vectors (i.e., $\boldsymbol{Q}_l\succ 0$ and $\boldsymbol{R}_l\succ 0$). The initial state $\boldsymbol{z}_0$ has a Gaussian distribution with zero mean and covariance matrix  $\boldsymbol{\Pi}_0\succ0 $, which is independent of $\{\boldsymbol{u}_l | l\geq 0\}$ and $\{\boldsymbol{v}_l | l\geq 0 \}$.  The matrices $\boldsymbol{F}_l$ (of dimension $n\times n$), $\boldsymbol{G}_l (n\times r)$, $\boldsymbol{H}_l$ ($q\times n$), $\boldsymbol{Q}_l$ ($r\times r$), $\boldsymbol{R}_l$ ($q\times q$), and $\boldsymbol{\Pi}_0$ ($n\times n$) are known a priori.  

The process $\{\boldsymbol{z}_l | l\geq 0 \}$ is used to model the (hidden) state of a physical system over time while the process $\{\boldsymbol{y}_l | l\geq 0\}$ models the measurements over time. Correspondingly, the two processes $\{\boldsymbol{u}_l | l\geq 0\}$ and $\{\boldsymbol{v}_l | l\geq 0\}$ are often called \emph{process noise} and \emph{measurement noise}, respectively. The goal is how to estimate the hidden states efficiently and accurately from the measurements.       

\begin{remark}
One can also consider the general case that the process noise $\boldsymbol{u}_l$ is correlated with the measurement noise $\boldsymbol{v}_l$ for each $l\geq 0$. That is    
\begin{align}
\hspace{-3.5mm} E \left(\left[\begin{array}{c} \boldsymbol{u}_l \\ \boldsymbol{v}_l \end{array}\right] \left[\begin{array}{cc} \boldsymbol{u}_l^T & \boldsymbol{v}_l^T \end{array}\right]  \right) &\hspace{-1mm}= \hspace{-1mm}\left[\begin{array}{cc} \boldsymbol{Q}_l  & \boldsymbol{S}_l \\ \boldsymbol{S}_l^T &\boldsymbol{R}_l \end{array}\right]  \;\;\;   l\geq 0,\hspace{-2mm} \label{equ:cov_extend}
\end{align}
where $\boldsymbol{S}_l \neq \boldsymbol{0}$.  The analysis in the remainder of the paper can be extended the above general case using algebra.  
\end{remark}

Formally, the objective at time step $l$ is to compute the optimal estimate $\hat{\boldsymbol{z}}_{l+1| l}^{\star}$ for the state $\boldsymbol{z}_{l+1}$ in terms of the past observed measurements $\{\boldsymbol{y}_i | l \geq i \geq 0 \}$
 by minimizing the following mean squared error (MSE)
\begin{align}
\hat{\boldsymbol{z}}_{l+1 | l}^{\star} &= \arg\min_{\hat{\boldsymbol{z}}_{l+1}}  E\left[\|\boldsymbol{z}_{l+1}-\hat{\boldsymbol{z}}_{l+1}\|^2 | \{\boldsymbol{y}_i | l \geq i \geq 0 \}\right]. \label{equ:z_MMSE}
\end{align}
It can be easily shown that the estimate $\hat{\boldsymbol{z}}_{l+1 | l}^{\star}$ takes the form of  (see Theorem 3.A.1 of \cite{KailathBook})
\begin{align}
\hat{\boldsymbol{z}}_{l+1 | l}^{\star}  =  E[\boldsymbol{z}_{l+1} | \{\boldsymbol{y}_i | l \geq i \geq 0 \}], \label{equ:z_condMean}
\end{align}
which is the conditional expectation of $\boldsymbol{z}_{l+1}$ given the measurements $\{\boldsymbol{y}_i | l \geq i \geq 0 \})$. 

Next we show that the above minimum mean squared error  (MMSE) criterion is equivalent to the maximum log-likelihood (ML) criterion in computing $\hat{\boldsymbol{z}}_{l+1 | l}^{\star}$. The results are summarized in a lemma below:

\begin{lemma}
The estimate $\hat{\boldsymbol{z}}_{l+1 | l}^{\star}$ in (\ref{equ:z_condMean}) can be alternatively obtained using the maximum log-likelihood (ML) criterion
 \begin{align}
 &\hat{\boldsymbol{z}}_{l+1 | l}^{\star} = \arg\max_{z_{l+1}} \Big[\max_{\{\boldsymbol{z}_{i} |  l \geq i \geq 0 \}} \ln p(\{ \boldsymbol{z}_i | l+1\geq i\geq 0 \}, \nonumber  \\
 & \hspace{50mm} \{ \boldsymbol{y}_i | l \geq i \geq 0 \})\Big], \label{equ:z_ML} 
 \end{align}
 where $p(\{ \boldsymbol{z}_i | l+1\geq i \geq 0 \}, 
 \{ \boldsymbol{y}_i | l \geq i \geq 0 \})$ represents the joint Gaussian distribution of the random vectors $\{ \boldsymbol{z}_i | l+1\geq i \geq 0 \}$ and $\{ \boldsymbol{y}_i | l \geq i \geq 0 \}$.
\label{lemma:ML_MMSE}
\end{lemma}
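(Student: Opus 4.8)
The plan is to exploit the joint Gaussianity of the state-space model together with the elementary fact that, for a Gaussian vector, the mode (MAP estimate) coincides with the mean (MMSE estimate). The starting observation is that, because the dynamics (\ref{equ:stateSpace_z})--(\ref{equ:stateSpace_y}) are linear, the driving noises $\{\boldsymbol{u}_l\},\{\boldsymbol{v}_l\}$ are Gaussian, and the initial state $\boldsymbol{z}_0$ is Gaussian with $\boldsymbol{\Pi}_0\succ 0$, the stacked vector $(\boldsymbol{z}_0,\ldots,\boldsymbol{z}_{l+1},\boldsymbol{y}_0,\ldots,\boldsymbol{y}_l)$ is jointly Gaussian. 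Consequently the joint density $p(\{\boldsymbol{z}_i\},\{\boldsymbol{y}_i\})$ is the exponential of a negative quadratic form in all of its arguments.

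First I would rewrite the right-hand side of (\ref{equ:z_ML}) as a maximization of the \emph{posterior}. Since the measurements $\{\boldsymbol{y}_i\}$ are observed and therefore held fixed, the factorization $p(\{\boldsymbol{z}_i\},\{\boldsymbol{y}_i\}) = p(\{\boldsymbol{z}_i\}\mid\{\boldsymbol{y}_i\})\,p(\{\boldsymbol{y}_i\})$ shows that the second factor is constant with respect to all the hidden states. Hence the nested maximization in (\ref{equ:z_ML}) is equivalent to maximizing the Gaussian posterior $p(\boldsymbol{z}_0,\ldots,\boldsymbol{z}_{l+1}\mid\{\boldsymbol{y}_i\})$ jointly over all hidden states, and then reading off the $\boldsymbol{z}_{l+1}$ component.

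The technical core is the profiling step: the inner maximization eliminates the past states $\{\boldsymbol{z}_i\mid l\geq i\geq 0\}$ while leaving $\boldsymbol{z}_{l+1}$ free. Writing the posterior log-density as $-\tfrac12 Q(\boldsymbol{z}_{l+1},\boldsymbol{z}_0,\ldots,\boldsymbol{z}_l)$ with $Q$ a quadratic form, I would minimize $Q$ over $(\boldsymbol{z}_0,\ldots,\boldsymbol{z}_l)$ by completing the square. The standard Schur-complement identity then gives that the profiled quadratic equals $(\boldsymbol{z}_{l+1}-\boldsymbol{\mu})^T\boldsymbol{\Sigma}^{-1}(\boldsymbol{z}_{l+1}-\boldsymbol{\mu})$, where $\boldsymbol{\mu}$ and $\boldsymbol{\Sigma}$ are precisely the mean and covariance of the \emph{marginal} posterior of $\boldsymbol{z}_{l+1}$ given the measurements; equivalently, one may note that $p(\boldsymbol{z}_{l+1},\{\boldsymbol{z}_i\}\mid\{\boldsymbol{y}_i\})=p(\boldsymbol{z}_{l+1}\mid\{\boldsymbol{y}_i\})\,p(\{\boldsymbol{z}_i\}\mid\boldsymbol{z}_{l+1},\{\boldsymbol{y}_i\})$ and that the Gaussian conditional covariance, hence the peak value of the second factor, is independent of $\boldsymbol{z}_{l+1}$. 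In either form, profiling out the past states converts the joint posterior mode into the marginal posterior density of $\boldsymbol{z}_{l+1}$.

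Finishing, the outer $\arg\max_{z_{l+1}}$ of this marginal Gaussian quadratic is attained at its mean $\boldsymbol{\mu} = E[\boldsymbol{z}_{l+1}\mid\{\boldsymbol{y}_i\mid l\geq i\geq 0\}]$, which by (\ref{equ:z_condMean}) is exactly $\hat{\boldsymbol{z}}_{l+1\mid l}^{\star}$. The main obstacle I anticipate is the profiling identity itself, namely verifying that maximizing the joint Gaussian over the nuisance states $\{\boldsymbol{z}_i\mid l\geq i\geq 0\}$ reproduces the \emph{marginal} (rather than some conditional) law of $\boldsymbol{z}_{l+1}$; this is where the Schur-complement bookkeeping must be done carefully, and where one uses that the relevant covariance blocks are nonsingular, guaranteed by $\boldsymbol{Q}_l,\boldsymbol{R}_l,\boldsymbol{\Pi}_0\succ 0$ together with the full-row-rank assumption on $[\boldsymbol{F}_l\ \boldsymbol{G}_l]$.
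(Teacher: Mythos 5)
Your proposal is correct and follows essentially the same route as the paper's proof: both reduce the joint maximization to maximizing the Gaussian posterior (the factor $p(\{\boldsymbol{y}_i\})$ being constant), profile out the nuisance states using the fact that for a joint Gaussian the partial maximization reproduces the marginal's maximizer, and conclude via mode-equals-mean for the Gaussian marginal. Your explicit justification of the profiling step (via the Schur complement or the conditional factorization with state-independent peak value) is in fact more detailed than the paper's, which simply appeals to its Gaussian-marginal lemma.
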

\begin{proof}
See Appendix~\ref{appendix:proof_lemma_ML_MMSE} for the proof.
\end{proof}

The equivalence of the MMSE and ML in computing $\hat{\boldsymbol{z}}_{l+1 | l}^{\star}$ above is due to the fact that all the random vectors in (\ref{equ:stateSpace_z})-(\ref{equ:stateSpace_y}) are Gaussian distributed. We will apply the asynchronous PDMM in Table~\ref{table:nodeSelChain} to address the ML problem (\ref{equ:z_ML}) later on. 

\begin{remark}
The assumption on the matrices $\left\{\left[\boldsymbol{F}_l\textrm{ } \boldsymbol{G}_l\right] | l\geq 0\right\}$ in (\ref{equ:stateSpace_z})-(\ref{equ:stateSpace_y}) being of full row-rank ensures that when applying PDMM to solve (\ref{equ:z_ML}), the constructed matrices in $\mathcal{P}$ are nonsingular. On the other hand, the above assumption is not neccessarily required for the Kalman filter to work. 
\end{remark}


\subsection{Kalman filter}
\label{subsec:Kalman}

By making use of the linear structure of the state-space model (\ref{equ:stateSpace_z})-(\ref{equ:stateSpace_y}), the Kalman filter is able to compute the estimates $\{\hat{\boldsymbol{z}}_{l+1 | l }^{\star}| l\geq 0\}$ recursively and efficiently over time. In the following we briefly describes the recursive update expressions of the Kalman filter (see Chapter~9 of \cite{KailathBook}). 

Before presenting the update expressions for $\{\hat{\boldsymbol{z}}_{l+1 | l }^{\star} | l \geq 0\}$, we first need to introduce some notations. We denote the covariance matrix of the error vector $\tilde{\boldsymbol{z}}_{l | l-1}=\boldsymbol{z}_l - \hat{\boldsymbol{z}}_{l |l-1}^{\star}$ as  $\boldsymbol{D}_l$, i.e., $\boldsymbol{D}_l = E[\tilde{\boldsymbol{z}}_{l | l-1} \tilde{\boldsymbol{z}}_{l | l-1}^T]$.  When $l=0$, we let  
\begin{align}
\boldsymbol{D}_0 = E[\tilde{\boldsymbol{z}}_{0| -1}   \tilde{\boldsymbol{z}}_{0| -1}^T] = E[{\boldsymbol{z}}_{0}   {\boldsymbol{z}}_{0}^T] = \boldsymbol{\Pi}_0.  \label{equ:D_init_Kalman}
\end{align}
Equivalently we can set the initial estimate $\hat{\boldsymbol{z}}_{0 | -1}^{\star} $ to 
\begin{align} 
\hat{\boldsymbol{z}}_{0 | -1}^{\star} = \boldsymbol{0}.  \label{equ:z_init_Kalman}
\end{align}
The covariance matrix $\boldsymbol{D}_l$ characterizes the uncertainty of the estimate $\hat{\boldsymbol{z}}_{l | l-1 }^{\star}$ for the state $\boldsymbol{z}_l$.  A small $\boldsymbol{D}_l$ implies that $\hat{\boldsymbol{z}}_{l | l-1 }^{\star}$ is a good estimate with small uncertainty.  

Suppose the estimate $\hat{\boldsymbol{z}}_{l|l-1}^{\star}$ and the associated covariance matrix $\boldsymbol{D}_l$ for the state $\boldsymbol{z}_l$ were already obtained at time step $l-1$.  At time step $l$, the Kalman filter computes $\hat{\boldsymbol{z}}_{l+1 | l}^{\star} $ in terms of $\hat{\boldsymbol{z}}_{l | l-1}^{\star}$ and $\boldsymbol{y}_l$ as
\begin{align}
\hat{\boldsymbol{z}}_{l+1|l}^{\star} = (\boldsymbol{F}_l-\boldsymbol{K}_{l}\boldsymbol{H}_l)\hat{\boldsymbol{z}}_{l | l-1}^{\star} + \boldsymbol{K}_{l} \boldsymbol{y}_l  \;\;\;  l\geq 0, \label{equ:z_update_Kalman}
\end{align}
where the optimal Kalman gain $\boldsymbol{K}_{l}$ is computed by using $\boldsymbol{D}_l$ as 
\begin{align}
\boldsymbol{K}_{l} &= \boldsymbol{F}_l\boldsymbol{D}_l\boldsymbol{H}_l^T(\boldsymbol{H}_l \boldsymbol{D}_l \boldsymbol{H}_l^T+\boldsymbol{R}_l)^{-1} \;\;\;  l\geq 0 . \label{equ:K_update_Kalman}
\end{align}
By using $\boldsymbol{K}_{l}$ and $\boldsymbol{D}_l$, the associated covariance matrix $\boldsymbol{D}_{l+1}$ for $\hat{\boldsymbol{z}}_{l+1 | l}^{\star}$ can be computed as 
\begin{align}
\boldsymbol{D}_{l+1} &= \boldsymbol{F}_l\boldsymbol{D}_l \boldsymbol{F}_l^T +\boldsymbol{G}_l \boldsymbol{Q}_l \boldsymbol{G}_l^T -\boldsymbol{K}_l(\boldsymbol{H}_l\boldsymbol{D}_l \boldsymbol{H}_l^T+\boldsymbol{R}_l)\boldsymbol{K}_l \nonumber\\ & \hspace{56mm}\;\;\;  l\geq 0  . \label{equ:D_update_Kalman}
\end{align} 
Equ.~(\ref{equ:D_init_Kalman})-(\ref{equ:D_update_Kalman}) together specify the recursive update expressions of the Kalman filter. 

It is not difficult to see from (\ref{equ:z_update_Kalman}) that $\hat{\boldsymbol{z}}_{l+1 | l}^{\star}$ is in fact a linear combination of historical measurements $\{\boldsymbol{y}_i | l \geq i \geq 0 \}$. The contributions of $\boldsymbol{y}_i$ from $i=0$ till $i=l-1$ to $\hat{\boldsymbol{z}}_{l+1 | l}^{\star}$ are fully embedded in $\hat{\boldsymbol{z}}_{l | l-1}^{\star}$. Therefore, the Kalman filter only needs to store the most recent estimate and its associated covariance matrix for the computation at next time step.

Conceptually speaking, the Kalman filter can be viewed as performing message-passing over a chain graph $G_c=(\mathcal{V}_c,\mathcal{E}_c)$ with $\mathcal{V}_c\hspace{-0.5mm}=\hspace{-0.5mm}\{ l | l\hspace{-0.5mm}=\hspace{-0.5mm}0,1,2, \ldots\}$ and $\mathcal{E}_c\hspace{-0.5mm}=\hspace{-0.5mm}\{(l,l+1)| l\hspace{-0.5mm}=\hspace{-0.5mm}0,1,2,\ldots\}$.
 Each time step $l$ of the state-space model (\ref{equ:stateSpace_z})-(\ref{equ:stateSpace_y}) corresponds to a node $l \in \mathcal{V}_c$ in the chain graph. When node $l$ receives the message $(\hat{\boldsymbol{z}}_{l | l-1}^{\star}, \boldsymbol{D}_{l})$ from node $l-1$, it computes and forwards the message $(\hat{\boldsymbol{z}}_{l+1|l}^{\star}, \boldsymbol{D}_{l+1})$ to the next node $l+1$. In other words, the Kalman filter only performs the forward computation starting from node $l=0$ to nodes of higher indices.  

Finally it is worthy noting that the computation (\ref{equ:D_update_Kalman}) of $\{\boldsymbol{D}_l | l\geq 0\}$ is independent of the estimates $\{\hat{\boldsymbol{z}}_{l+1 | l }^{\star} | l\geq 0\}$.  Instead, $\{\boldsymbol{D}_l | l \geq 0\}$ are constructed recursively based on the structure (i.e., $\{(\boldsymbol{F}_l,\boldsymbol{H}_l)| l\geq 0\}$) of the state-space model and the covariance matrices of the processes $\{(\boldsymbol{u}_l,\boldsymbol{v}_l)| l\geq 0\}$ and $\boldsymbol{z}_0$ over time.  The above property is similar to the computation (\ref{equ:optimalP_leafNode})-(\ref{equ:optimalParam}) of the matrix set $\mathcal{P}$ of PDMM in Section~\ref{sec:finiteConvergence}. We will show in next subsection that $\{\boldsymbol{D}_l | l \geq 0\}$ are equivalent to the matrix set $\mathcal{P}$ of PDMM obtained by using (\ref{equ:optimalP_leafNode})-(\ref{equ:optimalParam}) in solving the ML problem (\ref{equ:z_ML}).   

 \begin{remark}
 We note that the Kalman filter works even for singular matrices $\{\boldsymbol{D}_l | l\geq 0\}$ while PDMM requires all the matrices in $\mathcal{P}$ to be nonsingular. We conjecture that at least for a subclass (e.g., the ML problem (\ref{equ:z_ML})) of the optimization problem (\ref{equ:problemMultiNode_re}), the matrix set $\mathcal{P}$ might not need to be nonsingular for PDMM to work.     
 \end{remark}

\subsection{Addressing the ML problem by using PDMM } 
In this subsection, we consider using the results of PDMM in Section~\ref{sec:finiteConvergence} to solve the ML problem (\ref{equ:z_ML}). To be able to draw connections between PDMM and the Kalman filter, we focus on the asynchronous PDMM (See the description in \ref{subsub:PDMM_Asyn_chain}).   

\subsubsection{Problem reformulation onto a chain graph} We note from Lemma~\ref{lemma:ML_MMSE} that $p(\{\boldsymbol{z}_i | l+1 \geq i \geq 0 \},\{\boldsymbol{y}_i |  l \geq i\geq 0\})$ represents a joint Gaussian distribution. The covariance matrix of the distribution over  $\{\boldsymbol{z}_i | l+1 \geq i \geq 0 \}$ and $\{\boldsymbol{y}_i |  l \geq i\geq 0\}$ is sparse due to the linear structure of the state-space model (\ref{equ:stateSpace_z})-(\ref{equ:stateSpace_y}) and the property (\ref{equ:cov}). As a result, taking the logarithm of the distribution produces a summation of quadratic functions     
\begin{align}
\hspace{-0mm}&\ln p( \{\boldsymbol{z}_i | l+1 \geq i\geq 0 \},\{\boldsymbol{y}_i |  l \geq i \geq 0\} )  \nonumber \\
\hspace{-0mm}&= -\sum_{i=0}^l g_i(\boldsymbol{z}_i, \boldsymbol{u}_i) + \mathrm{constant} \label{equ:sum_quad_state_space}
\end{align}
where the individual quadratic functions $g_i(\boldsymbol{z}_i, \boldsymbol{u}_i)$, $i=0,\ldots, l$, are 
expressed as
\begin{align}
g_0(\boldsymbol{z}_0,\boldsymbol{u}_0) &= \frac{1}{2}\hspace{-0.7mm}\left[\hspace{-1.5mm}\begin{array}{c}  \boldsymbol{u}_0 \\ \boldsymbol{y}_0 \hspace{-0.3mm}-\hspace{-0.3mm} \boldsymbol{H}_0 \boldsymbol{z}_0 \end{array} \hspace{-1.5mm} \right]^T\hspace{-0.8mm}
\left[\hspace{-1.5mm} \begin{array}{cc} \boldsymbol{Q}_0 &  \boldsymbol{0} \\ \boldsymbol{0}  & \boldsymbol{R}_0 \end{array} \hspace{-1.5mm}\right]^{-1} \hspace{-0.8mm}
\left[\hspace{-1.5mm} \begin{array}{c}  \boldsymbol{u}_0  \\ \boldsymbol{y}_0 - \boldsymbol{H}_0 \boldsymbol{z}_0 \end{array}\hspace{-1.5mm} \right] \nonumber\\
& +\frac{1}{2}\boldsymbol{z}_0 \boldsymbol{\Pi}_0^{-1} \boldsymbol{z}_0^T   \label{equ:sum_quad_state_space1}\\
g_i(\boldsymbol{z}_i,\boldsymbol{u}_i) &= \frac{1}{2}\hspace{-0.7mm}\left[\hspace{-1.5mm}\begin{array}{c} \boldsymbol{u}_i  \\ \boldsymbol{y}_i  \hspace{-0.3mm}-\hspace{-0.3mm} \boldsymbol{H}_i \boldsymbol{z}_i  \end{array} \hspace{-1.5mm} \right]^T\hspace{-0.8mm}
\left[\hspace{-1.5mm} \begin{array}{cc} \boldsymbol{Q}_i &  \boldsymbol{0} \\ \boldsymbol{0}^T  & \boldsymbol{R}_i \end{array} \hspace{-1.5mm}\right]^{-1} \hspace{-0.8mm}
\left[\hspace{-1.5mm} \begin{array}{c} \boldsymbol{u}_i \\  \boldsymbol{y}_i \hspace{-0.3mm}-\hspace{-0.3mm} \boldsymbol{H}_i \boldsymbol{z}_i \end{array}\hspace{-1.5mm} \right] \nonumber \\ & \hspace{40mm}   i=1,\dots, l  \label{equ:sum_quad_state_space2} 
\end{align}
where the set of vectors $(\boldsymbol{z}_i, \boldsymbol{u}_i)$, $i=0,\ldots,l+1$, satisfy   
\begin{align}
\boldsymbol{z}_{i+1} =& \boldsymbol{F}_i\boldsymbol{z}_i +\boldsymbol{G}_i \boldsymbol{u}_i \quad   i= 0,1,\ldots, l, \label{equ:sum_quad_state_space4}
\end{align}
which are obtained directly from (\ref{equ:stateSpace_z}).  

Upon deriving (\ref{equ:sum_quad_state_space})-(\ref{equ:sum_quad_state_space4}), the ML problem (\ref{equ:z_ML}) can be reformulated as a decomposable quadratic optimization over a chain graph $G_{c,l+1}=(\mathcal{V}_{c,l+1},\mathcal{E}_{c,l+1})$ with $\mathcal{V}_{c,l+1}=\{i | i=0,1,\ldots, l+1\}$ and $\mathcal{E}_{c,l+1}=\{(i,i+1) | i=0,\ldots, l \}$
\begin{align}
&\hat{\boldsymbol{z}}_{l+1 | l}^{\star}=\arg\min_{z_{l+1}} \left[ \min_{ \{\boldsymbol{z}_i, \boldsymbol{u}_i\} } \sum_{i =0}^l g_i(\boldsymbol{z}_i, \boldsymbol{u}_i) \right]  \label{equ:quad_Kalman} \\
& \textrm{ s.t.  }  \left[\hspace{-0.7mm}\begin{array}{cc}  \boldsymbol{0} & \boldsymbol{I} \end{array} \hspace{-0.7mm} \right] \left[\hspace{-0.7mm}\begin{array}{c}\boldsymbol{u}_{i+1} \\ \boldsymbol{z}_{i+1} \end{array}\hspace{-0.7mm}\right]  \hspace{-0.7mm}+\hspace{-0.7mm} \left[\hspace{-0.7mm}\begin{array}{cc}  -\boldsymbol{G}_i & -\boldsymbol{F}_i \end{array} \right] \left[\begin{array}{c}\boldsymbol{u}_i \\ \boldsymbol{z}_i \end{array}\hspace{-0.7mm}\right] = \boldsymbol{0}\nonumber\\
&\hspace{50mm}  (i,i+1)\in \mathcal{E}_{c,l+1},\label{equ:quad_Kalman1}
\end{align}
which falls within the problem formulation of (\ref{equ:quadOptimization}).  We note that there is no local quadratic function at node $l+1$ in (\ref{equ:quad_Kalman}), which is because $\boldsymbol{y}_{l+1}$ is not available yet. 

It is clear that as the time step $l$ of the state-space model increases by 1 to be $l+1$, the above chain graph $G_{c,l+1}=(\mathcal{V}_{c,l+1},\mathcal{E}_{c,l+1})$ is extended to be $G_{c,l+2}=(\mathcal{V}_{c,l+2},\mathcal{E}_{c,l+2})$ with $\mathcal{V}_{c,l+2} = \mathcal{V}_{c,l+1}\cup \{l+2\}$ and $\mathcal{E}_{c,l+2} = \mathcal{E}_{c,l+1}\cup \{(l+1,l+2)\}$. We will show later that PDMM is able to compute $\hat{\boldsymbol{z}}_{l+2|l+1}^{\star}$ based on $\hat{\boldsymbol{z}}_{l+1|l}^{\star}$ and $\boldsymbol{y}_{l+1}$ without using the previous measurements $\{\boldsymbol{y}_i | l\geq i\geq 0 \}$. 

\subsubsection{Updating procedure of asynchronous PDMM} We now  consider applying the asynchronous PDMM presented in Table~\ref{table:nodeSelChain} to solve (\ref{equ:quad_Kalman})-(\ref{equ:quad_Kalman1}). To do so, we let $\boldsymbol{x}_i =[\boldsymbol{u}_i^T, \boldsymbol{z}_i^T]^T$, $i=1,\ldots,l+1$. Furthermore, we take the node $l+1$ in $\mathcal{V}_{c,l+1}$ to be the root node $r$ (see Fig.~\ref{fig:prob_graph_chain}).  With the root node $r=l+1$, the corresponding directed graph can be built for $G_{c,l+1}$, which we denote as $\vec{G}_{c,l+1} = (\mathcal{V}_{c,l+1},\vec{\mathcal{E}}_{c,l+1})$ where $\vec{\mathcal{E}}_{c,l+1}=\{[i,i+1] | i=0,\ldots, l\}$. 

The problem formulation (\ref{equ:quad_Kalman})-(\ref{equ:quad_Kalman1}) suggests that only the estimate $\hat{\boldsymbol{z}}_{l+1 | l}^{\star}$ at the root node $r=l+1$ is of interest. Therefore, the updating procedure in Table~\ref{table:nodeSelChain} can be simplified to the forward computation only from $i=0$ to $i=l$. The backward computation can be removed completely.  

To apply the simplified updating procedure in Table~\ref{table:nodeSelChain}, we first need to construct the matrix set $\mathcal{P}=\{\boldsymbol{P}_{i,i+1} | (i,i+1)\in \mathcal{E}_{c,l+1} \}$ by following the instructions presented in \ref{subsec:param_selection}.  We start with $\boldsymbol{P}_{01}$ as its corresponding directed edge $[0,1]$ has no preceding edges in $\vec{\mathcal{E}}_{c,l+1}$ (i.e., $\mathcal{N}_0 = \{1\}$). Combining (\ref{equ:quadOptimization}), (\ref{equ:optimalP_leafNode}), (\ref{equ:sum_quad_state_space1}), and (\ref{equ:quad_Kalman1}) produces 
\begin{align}
&\hspace{-2mm}\boldsymbol{P}_{01} = \left[\begin{array}{cc} \boldsymbol{G}_0 &  \boldsymbol{F}_0  \end{array}\right] \boldsymbol{J}_0^{-1}
\left[\begin{array}{c} \boldsymbol{G}_0 \\ \boldsymbol{F}_0^T \end{array}\right], \label{equ:P_SS_init}
\end{align}
where the matrix $\boldsymbol{J}_0$ is given by 
\begin{align}
\boldsymbol{J}_0=\left[\begin{array}{cc} \boldsymbol{Q}_0^{-1} & \boldsymbol{0} \\ \boldsymbol{0} & \boldsymbol{H}_0^T\boldsymbol{R}_0^{-1}\boldsymbol{H}_0 + \boldsymbol{\Pi}_{0}^{-1} \end{array}\right].  \label{equ:J_SS_init}
\end{align}
The matrix $\boldsymbol{J}_0$ is nonsingular as $\boldsymbol{\Pi}_0\succ \boldsymbol{0}$ and $\boldsymbol{Q}_0\succ \boldsymbol{0}$.  
It can be easily shown that $\boldsymbol{P}_{01}$ is also nonsingular as  $\left[\boldsymbol{F}_0\textrm{ } \boldsymbol{G}_0\right] $ is of full row-rank from (\ref{equ:stateSpace_z})-(\ref{equ:stateSpace_y}). 
With $\boldsymbol{P}_{01}$, the other matrices in $\mathcal{P}$ can be constructed recursively by combining (\ref{equ:quadOptimization}), (\ref{equ:optimalParam}), (\ref{equ:sum_quad_state_space2}), and (\ref{equ:quad_Kalman1}), which can be expressed as  
\begin{align}
&\hspace{-3.5mm}\boldsymbol{P}_{i,i+1} = \left[\begin{array}{cc}  \boldsymbol{G}_i & \boldsymbol{F}_i \end{array}\right]  \boldsymbol{J}_i^{-1}\left[\begin{array}{c} \boldsymbol{G}_i \\  \boldsymbol{F}_i^T  \end{array}\right] \;\;\; i=1,\ldots, l, \label{equ:P_SS}
\end{align}
where the matrix $\boldsymbol{J}_i$ is a function of $\boldsymbol{P}_{i-1,i}$, given by
\begin{align}
\boldsymbol{J}_i=\left[\begin{array}{cc} \boldsymbol{Q}_i^{-1} & \boldsymbol{0} \\ \boldsymbol{0} & \boldsymbol{H}_i^T\boldsymbol{R}_i^{-1}\boldsymbol{H}_i + \boldsymbol{P}_{i-1,i}^{-1} \end{array}\right]. \label{equ:J_SS}
\end{align} 
Similarly, the matrix $\boldsymbol{P}_{i,i+1} $ is also nonsingular due to the property that $\left[ \boldsymbol{F}_i\textrm{ } \boldsymbol{G}_i\right] $ is of full row-rank. Therefore,  Assumption \ref{assumption:fullRowRank} holds for the matrix set $\mathcal{P}$ constructed by (\ref{equ:P_SS_init})-(\ref{equ:J_SS}).

The relation between the matrix set $\mathcal{P}=\{\boldsymbol{P}_{i,i+1} | (i,i+1)\in\mathcal{E}_{c,l+1}\}$ constructed by (\ref{equ:P_SS_init})-(\ref{equ:J_SS}) and the set of covariance matrices $\{\boldsymbol{D}_i | i\geq 1\}$ of the Kalman filter is characterized in a proposition below:
\begin{proposition}
Let the matrix set $\mathcal{P}=\{\boldsymbol{P}_{i,i+1} | (i,i+1)\in\mathcal{E}_{c,l+1}\}$ be constructed by using  (\ref{equ:P_SS_init})-(\ref{equ:J_SS}) for the quadratic optimization (\ref{equ:quad_Kalman})-(\ref{equ:quad_Kalman1}). The matrices in $\mathcal{P}$ are related to the matrices $\boldsymbol{D}_i$, $i\geq 1$, of the Kalman filter as  
\begin{align}
\boldsymbol{D}_{i+1} = \boldsymbol{P}_{i,i+1}\quad i=0,\ldots, l.
\label{equ:D_P_equ}
\end{align}
\label{prop:D_P_relation}
\vspace{-5mm}
\end{proposition}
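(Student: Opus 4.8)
The plan is to proceed by induction on $i$, exploiting the fact that the recursion (\ref{equ:P_SS})--(\ref{equ:J_SS}) defining $\boldsymbol{P}_{i,i+1}$ is self-referential in exactly the same way as the Kalman covariance recursion (\ref{equ:D_init_Kalman})--(\ref{equ:D_update_Kalman}) defining $\boldsymbol{D}_{i+1}$. The single algebraic bridge between the two forms is the matrix inversion lemma (Woodbury identity), which converts the ``information form'' $(\boldsymbol{H}^T\boldsymbol{R}^{-1}\boldsymbol{H}+\boldsymbol{M}^{-1})^{-1}$ appearing through $\boldsymbol{J}_i$ into the ``covariance form'' with an explicit Kalman-gain correction.

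First I would collapse the block-matrix product in (\ref{equ:P_SS}). Because $\boldsymbol{J}_i$ in (\ref{equ:J_SS}) is block diagonal (the off-diagonal blocks vanish precisely because the cross-covariance $\boldsymbol{S}_l$ is zero under (\ref{equ:cov})), its inverse is block diagonal as well, and the product separates into a process-noise term and a filtered-state term:
\[
\boldsymbol{P}_{i,i+1} = \boldsymbol{G}_i\boldsymbol{Q}_i\boldsymbol{G}_i^T + \boldsymbol{F}_i\bigl(\boldsymbol{H}_i^T\boldsymbol{R}_i^{-1}\boldsymbol{H}_i + \boldsymbol{P}_{i-1,i}^{-1}\bigr)^{-1}\boldsymbol{F}_i^T,
\]
with the convention $\boldsymbol{P}_{-1,0}^{-1}=\boldsymbol{\Pi}_0^{-1}$ absorbing the leaf case (\ref{equ:P_SS_init})--(\ref{equ:J_SS_init}). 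For the base case $i=0$ I would use $\boldsymbol{D}_0=\boldsymbol{\Pi}_0$ from (\ref{equ:D_init_Kalman}) and apply the Woodbury identity
\[
\bigl(\boldsymbol{M}^{-1} + \boldsymbol{H}^T\boldsymbol{R}^{-1}\boldsymbol{H}\bigr)^{-1} = \boldsymbol{M} - \boldsymbol{M}\boldsymbol{H}^T\bigl(\boldsymbol{H}\boldsymbol{M}\boldsymbol{H}^T + \boldsymbol{R}\bigr)^{-1}\boldsymbol{H}\boldsymbol{M}
\]
with $\boldsymbol{M}=\boldsymbol{D}_0$ to the middle term, giving
\[
\boldsymbol{F}_0\bigl(\boldsymbol{H}_0^T\boldsymbol{R}_0^{-1}\boldsymbol{H}_0 + \boldsymbol{D}_0^{-1}\bigr)^{-1}\boldsymbol{F}_0^T = \boldsymbol{F}_0\boldsymbol{D}_0\boldsymbol{F}_0^T - \boldsymbol{F}_0\boldsymbol{D}_0\boldsymbol{H}_0^T\bigl(\boldsymbol{H}_0\boldsymbol{D}_0\boldsymbol{H}_0^T + \boldsymbol{R}_0\bigr)^{-1}\boldsymbol{H}_0\boldsymbol{D}_0\boldsymbol{F}_0^T.
\]
Recognizing the Kalman gain (\ref{equ:K_update_Kalman}), $\boldsymbol{K}_0=\boldsymbol{F}_0\boldsymbol{D}_0\boldsymbol{H}_0^T(\boldsymbol{H}_0\boldsymbol{D}_0\boldsymbol{H}_0^T+\boldsymbol{R}_0)^{-1}$, the subtracted term is exactly $\boldsymbol{K}_0(\boldsymbol{H}_0\boldsymbol{D}_0\boldsymbol{H}_0^T+\boldsymbol{R}_0)\boldsymbol{K}_0^T$; adding back $\boldsymbol{G}_0\boldsymbol{Q}_0\boldsymbol{G}_0^T$ reproduces the right-hand side of (\ref{equ:D_update_Kalman}), so $\boldsymbol{P}_{01}=\boldsymbol{D}_1$.

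For the inductive step I would assume $\boldsymbol{P}_{i-1,i}=\boldsymbol{D}_i$ and repeat the identical Woodbury manipulation with $\boldsymbol{M}=\boldsymbol{P}_{i-1,i}=\boldsymbol{D}_i$; the induction hypothesis is precisely what allows the term $\boldsymbol{P}_{i-1,i}^{-1}$ inside $\boldsymbol{J}_i$ to act as $\boldsymbol{D}_i^{-1}$, so the same computation closes and yields $\boldsymbol{P}_{i,i+1}=\boldsymbol{D}_{i+1}$. The main obstacle here is clerical rather than conceptual: one must verify the block-diagonal inversion is legitimate (i.e.\ that the vanishing of the off-diagonal blocks of $\boldsymbol{J}_i$ really follows from (\ref{equ:cov})), and track the transposes carefully when symmetrizing the correction as $\boldsymbol{K}_i(\boldsymbol{H}_i\boldsymbol{D}_i\boldsymbol{H}_i^T+\boldsymbol{R}_i)\boldsymbol{K}_i^T$, so that the quadratic-form structure matches (\ref{equ:D_update_Kalman}) term by term.
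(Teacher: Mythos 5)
Your proposal is correct and follows essentially the same route as the paper's Appendix~C: collapse the block-diagonal $\boldsymbol{J}_i$, apply the Woodbury identity to $(\boldsymbol{P}_{i-1,i}^{-1}+\boldsymbol{H}_i^T\boldsymbol{R}_i^{-1}\boldsymbol{H}_i)^{-1}$, and recognize the Kalman gain so that the result matches (\ref{equ:D_update_Kalman}). The only difference is presentational: you make the induction on $i$ explicit (base case via $\boldsymbol{P}_{-1,0}=\boldsymbol{\Pi}_0=\boldsymbol{D}_0$), whereas the paper writes a single chain of equalities in which the identification $\boldsymbol{P}_{i-1,i}=\boldsymbol{D}_i$ is used implicitly.
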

\begin{proof}
See Appendix~\ref{appendix:proof_D_P_relation} for the proof. 
\end{proof}


We are now ready to consider the forward computation in Table~\ref{table:nodeSelChain}. We show in a theorem below that the message $\boldsymbol{m}_{i\rightarrow i+1}^{\star}$ is in fact the optimal solution $\hat{\boldsymbol{z}}_{i+1 | i}^{\star}$ for each $i=0,\ldots, l$. To simplify the analysis, we introduce the initial message $\boldsymbol{m}_{-1 \rightarrow 0}^{\star} = \boldsymbol{0}$ (equal to $\hat{\boldsymbol{z}}_{0|-1}^{\star}$ from  (\ref{equ:z_init_Kalman})) and the initial matrix $\boldsymbol{P}_{-1,0}= \boldsymbol{\Pi}_0 = \boldsymbol{D}_0$. 
\begin{theorem}
Let the matrix set $\mathcal{P}=\{\boldsymbol{P}_{i,i+1} | (i,i+1)\in\mathcal{E}_{c,l+1}\}$ be constructed by using  (\ref{equ:P_SS_init})-(\ref{equ:J_SS}) for the quadratic optimization (\ref{equ:quad_Kalman})-(\ref{equ:quad_Kalman1}). Then the messages $\{\boldsymbol{m}_{{i\rightarrow i+1}}^{\star} | i=0,\ldots, l\}$ generated by the forward computation of Table~\ref{table:nodeSelChain} can be computed recursively as
\begin{align}
\boldsymbol{m}_{i\rightarrow i+1}^{\star} &= (\boldsymbol{F}_i-\boldsymbol{K}_{i}\boldsymbol{H}_i)\hat{\boldsymbol{m}}_{i | i-1}^{\star} + \boldsymbol{K}_{i} \boldsymbol{y}_i \label{equ:message_recursive} \\
&=\hat{\boldsymbol{z}}_{i+1 | i}^{\star}\quad i=0,\ldots, l. 
\end{align}
\label{theorem:PDMM_Kalman_equ}
\end{theorem}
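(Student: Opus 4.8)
The plan is to prove the identity by induction on $i$, using the one-directional message formula of Lemma~\ref{lemma:mess_oneDirection} and Proposition~\ref{prop:D_P_relation} as the two main ingredients. First I would read the problem data off the reformulation (\ref{equ:quad_Kalman})-(\ref{equ:quad_Kalman1}): with $\boldsymbol{x}_i=[\boldsymbol{u}_i^T,\boldsymbol{z}_i^T]^T$ the edge matrices along the chain are $\boldsymbol{A}_{i|i+1}=[\,-\boldsymbol{G}_i\ \ -\boldsymbol{F}_i\,]=:\boldsymbol{B}_i$ and $\boldsymbol{A}_{i|i-1}=[\,\boldsymbol{0}\ \ \boldsymbol{I}\,]$, with $\boldsymbol{c}_{i,i+1}=\boldsymbol{0}$, while expanding the quadratics $g_i$ in (\ref{equ:sum_quad_state_space1})-(\ref{equ:sum_quad_state_space2}) identifies $\boldsymbol{\Sigma}_i=\mathrm{blkdiag}(\boldsymbol{Q}_i^{-1},\,\boldsymbol{H}_i^T\boldsymbol{R}_i^{-1}\boldsymbol{H}_i)$ and $\boldsymbol{a}_i=[\,\boldsymbol{0}^T\ \ (\boldsymbol{H}_i^T\boldsymbol{R}_i^{-1}\boldsymbol{y}_i)^T\,]^T$ (the extra $\boldsymbol{\Pi}_0^{-1}$ term is absorbed into the lower block of $\boldsymbol{\Sigma}_0$). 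Since $\boldsymbol{A}_{i|i-1}^T\boldsymbol{P}_{i-1,i}^{-1}\boldsymbol{A}_{i|i-1}$ is the matrix carrying $\boldsymbol{P}_{i-1,i}^{-1}$ in its lower-right block, the matrix $\boldsymbol{J}_i$ of (\ref{equ:J_SS}) is precisely $\boldsymbol{\Sigma}_i+\boldsymbol{A}_{i|i-1}^T\boldsymbol{P}_{i-1,i}^{-1}\boldsymbol{A}_{i|i-1}$, so that $\boldsymbol{P}_{i,i+1}=\boldsymbol{B}_i\boldsymbol{J}_i^{-1}\boldsymbol{B}_i^T$; the middle (Hessian) factor appearing in (\ref{equ:mess_oneDirection}) then specializes, for the chain, to $\boldsymbol{M}_i:=\boldsymbol{J}_i+\boldsymbol{B}_i^T\boldsymbol{P}_{i,i+1}^{-1}\boldsymbol{B}_i$.

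The crux of the argument is a cancellation of the factor $2$ in the message update. Applying the Woodbury identity to $\boldsymbol{M}_i^{-1}=(\boldsymbol{J}_i+\boldsymbol{B}_i^T\boldsymbol{P}_{i,i+1}^{-1}\boldsymbol{B}_i)^{-1}$ and using the self-referential relation $\boldsymbol{B}_i\boldsymbol{J}_i^{-1}\boldsymbol{B}_i^T=\boldsymbol{P}_{i,i+1}$, I expect to obtain the clean identity
\begin{align}
\boldsymbol{B}_i\boldsymbol{M}_i^{-1}=\tfrac{1}{2}\,\boldsymbol{B}_i\boldsymbol{J}_i^{-1}. \nonumber
\end{align}
Substituting this into the forward-message expression (\ref{equ:mess_oneDirection}) (with $\boldsymbol{c}_{i,i+1}=\boldsymbol{0}$ and right-hand sum over $\mathcal{N}_i/\{i+1\}=\{i-1\}$) removes the leading $2$ and leaves $\boldsymbol{m}_{i\rightarrow i+1}^{\star}=-\boldsymbol{B}_i\boldsymbol{J}_i^{-1}\big(\boldsymbol{a}_i+\boldsymbol{A}_{i|i-1}^T\boldsymbol{P}_{i-1,i}^{-1}\boldsymbol{m}_{i-1\rightarrow i}^{\star}\big)$. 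Because $\boldsymbol{J}_i$ is block diagonal and both $\boldsymbol{a}_i$ and $\boldsymbol{A}_{i|i-1}^T\boldsymbol{P}_{i-1,i}^{-1}\boldsymbol{m}_{i-1\rightarrow i}^{\star}$ have zero upper block, only the $\boldsymbol{F}_i$ part of $-\boldsymbol{B}_i=[\,\boldsymbol{G}_i\ \ \boldsymbol{F}_i\,]$ survives, collapsing the message to the information-form predictor
\begin{align}
\boldsymbol{m}_{i\rightarrow i+1}^{\star}=\boldsymbol{F}_i\big(\boldsymbol{H}_i^T\boldsymbol{R}_i^{-1}\boldsymbol{H}_i+\boldsymbol{P}_{i-1,i}^{-1}\big)^{-1}\big(\boldsymbol{H}_i^T\boldsymbol{R}_i^{-1}\boldsymbol{y}_i+\boldsymbol{P}_{i-1,i}^{-1}\boldsymbol{m}_{i-1\rightarrow i}^{\star}\big). \nonumber
\end{align}

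Next I would invoke Proposition~\ref{prop:D_P_relation} to replace $\boldsymbol{P}_{i-1,i}$ by $\boldsymbol{D}_i$ and the inductive hypothesis $\boldsymbol{m}_{i-1\rightarrow i}^{\star}=\hat{\boldsymbol{z}}_{i|i-1}^{\star}$, so the previous display becomes the familiar ``prior-plus-measurement'' fusion of the mean. It then remains to match this information-form update with the covariance-form Kalman recursion (\ref{equ:z_update_Kalman})-(\ref{equ:K_update_Kalman}); this follows from two standard Woodbury identities, namely $(\boldsymbol{H}_i^T\boldsymbol{R}_i^{-1}\boldsymbol{H}_i+\boldsymbol{D}_i^{-1})^{-1}\boldsymbol{H}_i^T\boldsymbol{R}_i^{-1}=\boldsymbol{D}_i\boldsymbol{H}_i^T(\boldsymbol{H}_i\boldsymbol{D}_i\boldsymbol{H}_i^T+\boldsymbol{R}_i)^{-1}$ and $(\boldsymbol{H}_i^T\boldsymbol{R}_i^{-1}\boldsymbol{H}_i+\boldsymbol{D}_i^{-1})^{-1}\boldsymbol{D}_i^{-1}=\boldsymbol{I}-\boldsymbol{D}_i\boldsymbol{H}_i^T(\boldsymbol{H}_i\boldsymbol{D}_i\boldsymbol{H}_i^T+\boldsymbol{R}_i)^{-1}\boldsymbol{H}_i$, which after left-multiplication by $\boldsymbol{F}_i$ yield exactly the coefficient $\boldsymbol{K}_i$ of $\boldsymbol{y}_i$ and the coefficient $\boldsymbol{F}_i-\boldsymbol{K}_i\boldsymbol{H}_i$ of $\hat{\boldsymbol{z}}_{i|i-1}^{\star}$. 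The base case $i=0$ is covered by the same computation under the conventions $\boldsymbol{m}_{-1\rightarrow 0}^{\star}=\boldsymbol{0}$ and $\boldsymbol{P}_{-1,0}=\boldsymbol{\Pi}_0=\boldsymbol{D}_0$, which reproduces $\boldsymbol{m}_{0\rightarrow 1}^{\star}=\boldsymbol{K}_0\boldsymbol{y}_0=\hat{\boldsymbol{z}}_{1|0}^{\star}$ because $\hat{\boldsymbol{z}}_{0|-1}^{\star}=\boldsymbol{0}$.

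The step I expect to be the main obstacle is the factor-$2$ cancellation $\boldsymbol{B}_i\boldsymbol{M}_i^{-1}=\tfrac12\boldsymbol{B}_i\boldsymbol{J}_i^{-1}$: it is the single point where the recursive construction of $\boldsymbol{P}_{ij}$ in (\ref{equ:optimalP_leafNode})-(\ref{equ:optimalParam}) interacts with PDMM's $-2\boldsymbol{A}_{i|j}$ message coefficient, and carrying it out correctly requires both the self-referential identity $\boldsymbol{B}_i\boldsymbol{J}_i^{-1}\boldsymbol{B}_i^T=\boldsymbol{P}_{i,i+1}$ and the full-row-rank assumption on $[\,\boldsymbol{F}_i\ \ \boldsymbol{G}_i\,]$ that guarantees $\boldsymbol{P}_{i,i+1}$, and hence $\boldsymbol{M}_i$, is invertible. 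Everything downstream is then routine Kalman algebra.
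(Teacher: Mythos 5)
Your proposal is correct and follows essentially the same route as the paper's Appendix D: the Woodbury-based cancellation $\boldsymbol{B}_i\boldsymbol{M}_i^{-1}=\tfrac12\boldsymbol{B}_i\boldsymbol{J}_i^{-1}$ (using $\boldsymbol{B}_i\boldsymbol{J}_i^{-1}\boldsymbol{B}_i^T=\boldsymbol{P}_{i,i+1}$) is exactly the paper's step~$(a)$, the block-diagonal collapse to the information-form predictor is its step~$(b)$, and the two Woodbury identities you quote are precisely (\ref{equ:matrix_equivalence1}) and (\ref{equ:matrix_equivalence2}), combined with Proposition~\ref{prop:D_P_relation} and the conventions $\boldsymbol{m}_{-1\rightarrow 0}^{\star}=\boldsymbol{0}$, $\boldsymbol{P}_{-1,0}=\boldsymbol{\Pi}_0$. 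The only cosmetic difference is that you frame the recursion explicitly as an induction, which the paper leaves implicit.
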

\begin{proof}
See Appendix~\ref{appendix:proof_PDMM_Kalman_equ} for the proof.
\end{proof}

Proposition~\ref{prop:D_P_relation} and Theorem~\ref{theorem:PDMM_Kalman_equ} together show that the asynchronous PDMM and the Kalman filter have the same update expressions. From the theory of the Kalman filter, the matrix $\boldsymbol{P}_{i,i+1}$, $i\geq 0 $, now has a physical meaning. That is  $\boldsymbol{P}_{i,i+1}$ reflects the uncertainty of the message (or equivalently, the estimator) $\boldsymbol{m}_{i\rightarrow i+1}^{\star}$ for the random variable $\boldsymbol{z}_{i+1}$. 

Consider using both the forward and backward computation of Table~\ref{table:nodeSelChain} for solving the quadratic optimization (\ref{equ:quad_Kalman})-(\ref{equ:quad_Kalman1}). In this case, the asynchronous PDMM can be interpreted as a \emph{ smoother} of the hidden states from the measurements. Furthermore, if the backward computation is implemented for only a fixed number $\kappa$ of steps   (i.e.., computation from $i=l$ to $i=l-\kappa$ for any $l\geq 0$),  the asynchronous PDMM can be interpreted as a \emph{fixed-lag smoother} (see  Chapter~4 of \cite{KailathBook}). 
 
\section{Conclusions} 
\label{sec:conclusion}
In this paper, we have optimized the convergence speed of PDMM for solving decomposable quadratic optimizations over tree-structured graphs. We provide a scheme on how to select the optimal matrix set $\mathcal{P}$ which leads to finite-time convergence of both synchronous and asynchronous PDMM. As an example, we apply asynchronous PDMM to perform causal estimation over a statistical  linear state-space model, where the estimation problem can be reformulated as a quadratic optimization over a  growing chain graph. We have shown that asynchronous PDMM and the Kalman filter share the same updating expressions under mild conditions. 

One future research direction is to  study for what problems, the nonsingularity conditions of the matrix set $\mathcal{P}$ can be relaxed or removed for PDMM. It is also of great interest to extend PDMM for direct estimation over nonlinear state-space models. Finally, it would be of great value to consider the optimal parameter selection of PDMM for general decomposable convex functions, which are more common than quadratic functions in practice.     


\appendices

\section{Proof for Lemma~\ref{lemma:mess_oneDirection}}
\label{appendix:proof_lemma_mess_oneDirection}

Before formally presenting the proof, we first introduce the Woodbury matrix identity in a lemma below:
\begin{lemma} [Woodbury matrix identity]
Suppose $\boldsymbol{A}\in \mathbb{R}^{m\times m}$ and $\boldsymbol{C}\in \mathbb{R}^{n\times n}$ are two nonsingular matrices.  For any matrix $\boldsymbol{U}\in \mathbb{R}^{m\times n}$ and $\boldsymbol{U}\in \mathbb{R}^{n\times m}$, there is 
\begin{align}
\hspace{-4mm}(\boldsymbol{A}\hspace{-0.6mm}+\hspace{-0.6mm}\boldsymbol{U}\hspace{-0.3mm}\boldsymbol{C}\hspace{-0.3mm}\boldsymbol{V})^{-1} \hspace{-0.6mm}\equiv\hspace{-0.6mm} \boldsymbol{A}^{-1} \hspace{-0.6mm}-\hspace{-0.6mm} \boldsymbol{A}^{-1}\hspace{-0.3mm}\boldsymbol{U}(\boldsymbol{C}^{-1}\hspace{-0.6mm}+\hspace{-0.6mm}\boldsymbol{V}\boldsymbol{A}^{-1}\hspace{-0.3mm}\boldsymbol{U})^{-1}\hspace{-0.3mm}\boldsymbol{V}\hspace{-0.3mm}\boldsymbol{A}^{-1}.\nonumber
\end{align}
\label{lemma:identity}\vspace{-3.5mm}
\end{lemma}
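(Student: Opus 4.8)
The plan is to prove the identity by direct verification: I will show that the claimed right-hand side, call it $\boldsymbol{N}=\boldsymbol{A}^{-1}-\boldsymbol{A}^{-1}\boldsymbol{U}\boldsymbol{W}\boldsymbol{V}\boldsymbol{A}^{-1}$ with $\boldsymbol{W}=(\boldsymbol{C}^{-1}+\boldsymbol{V}\boldsymbol{A}^{-1}\boldsymbol{U})^{-1}$, satisfies $(\boldsymbol{A}+\boldsymbol{U}\boldsymbol{C}\boldsymbol{V})\boldsymbol{N}=\boldsymbol{I}$. Since $\boldsymbol{A}+\boldsymbol{U}\boldsymbol{C}\boldsymbol{V}$ and $\boldsymbol{N}$ are both square $m\times m$ matrices, exhibiting a right inverse is enough to conclude that $\boldsymbol{N}=(\boldsymbol{A}+\boldsymbol{U}\boldsymbol{C}\boldsymbol{V})^{-1}$. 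Implicit in the statement is the requirement that $\boldsymbol{C}^{-1}+\boldsymbol{V}\boldsymbol{A}^{-1}\boldsymbol{U}$ be nonsingular so that $\boldsymbol{W}$ exists; I would state this assumption at the outset.

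First I would expand the product $(\boldsymbol{A}+\boldsymbol{U}\boldsymbol{C}\boldsymbol{V})\boldsymbol{N}$ into four terms. Using $\boldsymbol{A}\boldsymbol{A}^{-1}=\boldsymbol{I}$, the first term contributes $\boldsymbol{I}$, while the remaining three each carry a leading factor $\boldsymbol{U}$ and a trailing factor $\boldsymbol{V}\boldsymbol{A}^{-1}$. Factoring these out yields $(\boldsymbol{A}+\boldsymbol{U}\boldsymbol{C}\boldsymbol{V})\boldsymbol{N}=\boldsymbol{I}+\boldsymbol{U}\bigl[\boldsymbol{C}-\boldsymbol{W}-\boldsymbol{C}\boldsymbol{V}\boldsymbol{A}^{-1}\boldsymbol{U}\boldsymbol{W}\bigr]\boldsymbol{V}\boldsymbol{A}^{-1}$, so the entire claim reduces to showing that the bracketed matrix vanishes.

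The crux is exactly this cancellation. I would rewrite the bracket as $\boldsymbol{C}-(\boldsymbol{I}+\boldsymbol{C}\boldsymbol{V}\boldsymbol{A}^{-1}\boldsymbol{U})\boldsymbol{W}$ and then invoke the key algebraic observation $\boldsymbol{I}+\boldsymbol{C}\boldsymbol{V}\boldsymbol{A}^{-1}\boldsymbol{U}=\boldsymbol{C}(\boldsymbol{C}^{-1}+\boldsymbol{V}\boldsymbol{A}^{-1}\boldsymbol{U})$, which follows simply by distributing $\boldsymbol{C}$ over the sum. Substituting this and using $\boldsymbol{W}=(\boldsymbol{C}^{-1}+\boldsymbol{V}\boldsymbol{A}^{-1}\boldsymbol{U})^{-1}$ collapses the product $(\boldsymbol{C}^{-1}+\boldsymbol{V}\boldsymbol{A}^{-1}\boldsymbol{U})\boldsymbol{W}$ to $\boldsymbol{I}$, leaving $\boldsymbol{C}-\boldsymbol{C}=\boldsymbol{0}$. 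Hence $(\boldsymbol{A}+\boldsymbol{U}\boldsymbol{C}\boldsymbol{V})\boldsymbol{N}=\boldsymbol{I}$, which establishes the identity.

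There is no genuine obstacle here, since the argument is a routine verification; the only step requiring a small insight is the factorization $\boldsymbol{I}+\boldsymbol{C}\boldsymbol{V}\boldsymbol{A}^{-1}\boldsymbol{U}=\boldsymbol{C}(\boldsymbol{C}^{-1}+\boldsymbol{V}\boldsymbol{A}^{-1}\boldsymbol{U})$ that exposes the cancellation. As an alternative I would note that the identity can be derived constructively by inverting a suitable $(m+n)\times(m+n)$ block matrix in two different ways (eliminating each diagonal block in turn via Schur complements) and equating the resulting top-left blocks, but the direct check above is shorter and self-contained, so I would present it as the main proof.
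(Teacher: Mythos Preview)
Your proof is correct and is the standard direct verification of the Woodbury identity. Note, however, that the paper does not actually prove this lemma: it is stated without proof as a well-known identity and then used as a tool in the subsequent appendices, so there is no ``paper's own proof'' to compare against. Your verification, together with the explicit mention that $\boldsymbol{C}^{-1}+\boldsymbol{V}\boldsymbol{A}^{-1}\boldsymbol{U}$ must be invertible, supplies exactly the routine argument the paper omits.
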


We now describe the proof for Lemma~\ref{lemma:mess_oneDirection}. The message $\hat{\boldsymbol{m}}_{i\rightarrow j}^{k+1}$ over the directed edge $[i,j]\in \vec{\mathcal{E}}$ can be computed by combining (\ref{equ:quadOptimization}),  (\ref{equ:x_updateSyn1}),  (\ref{equ:message_update}), (\ref{equ:optimalParam}) and Lemma~\ref{lemma:identity} in three steps.  Firstly, we compute $\hat{\boldsymbol{x}}_{i+1}^{k+1}$ by inserting the expression for $f_i(\boldsymbol{x}_i)$ in (\ref{equ:quadOptimization})  into (\ref{equ:x_updateSyn1}),  which can be expressed as
\begin{align}
\hspace{-0.2mm}\hat{\boldsymbol{x}}_i^{k+1}\hspace{-0mm}=& \Big(\hspace{-0mm}\boldsymbol{\Sigma}_i\hspace{-0.7mm}+\hspace{-0.8mm}\sum_{u\in \mathcal{N}_i}\hspace{-1.2mm}\boldsymbol{A}_{i | u}^T\boldsymbol{P}_{iu}^{-1}\boldsymbol{A}_{i | u}\hspace{-0.3mm}\Big)^{-1} \nonumber \\
&\hspace{-0.8mm}\cdot \Big(\hspace{-0.3mm}\boldsymbol{a}_i\hspace{-0.7mm}+\hspace{-0.9mm}\sum_{u\in \mathcal{N}_i}\hspace{-0.8mm}\boldsymbol{A}_{i | u}^T\boldsymbol{P}_{iu}^{-1}\hat{\boldsymbol{m}}_{u\rightarrow i}^k\hspace{-0.3mm}\Big). \nonumber 
\end{align}
Secondly, inserting the above expression for $\hat{\boldsymbol{x}}_{i}^{k+1}$ into (\ref{equ:message_update}) produces  
\begin{align}
\hspace{-2mm}\hat{\boldsymbol{m}}_{i\rightarrow j}^{k+1}
=& \boldsymbol{c}_{ij}-2\boldsymbol{A}_{i | j}  \Big(\boldsymbol{\Sigma}_i+\sum_{u\in \mathcal{N}_i}\boldsymbol{A}_{i | u}^T\boldsymbol{P}_{iu}^{-1}\boldsymbol{A}_{i | u}\Big)^{-1}  \nonumber 
\\
& \hspace{2mm}\cdot\Big(\boldsymbol{a}_i+\sum_{u\in \mathcal{N}_i/j}\boldsymbol{A}_{i | u}^T\boldsymbol{P}_{iu}^{-1}\hat{\boldsymbol{m}}_{u\rightarrow i}^k\Big) + \tilde{\boldsymbol{m}}_{j\rightarrow i}^k, \label{equ:message_combined}
\end{align}
where the quantity $\tilde{\boldsymbol{m}}_{j\rightarrow i}^k$ is given by  
\begin{align}
\tilde{\boldsymbol{m}}_{j\rightarrow i}^k=\boldsymbol{B}_{i\rightarrow j}\hat{\boldsymbol{m}}_{j\rightarrow i}^k, \label{equ:message_combined2}
\end{align}
where the matrix $\boldsymbol{B}_{i\rightarrow j} $ is expressed as
\begin{align}
\boldsymbol{B}_{i\rightarrow j} &\hspace{-0.7mm} = \hspace{-0.7mm} \Big[ \boldsymbol{I} \hspace{-0.7mm}-\hspace{-0.7mm} 2\boldsymbol{A}_{i | j}  \Big(\boldsymbol{\Sigma}_i \hspace{-0.7mm}+\hspace{-0.7mm} \sum_{u\in \mathcal{N}_i}\boldsymbol{A}_{i | u}^T\boldsymbol{P}_{iu}^{-1}\boldsymbol{A}_{i | u}\Big)^{-1} \boldsymbol{A}_{i | j}^T\boldsymbol{P}_{ij}^{-1}\Big]. \nonumber 
\end{align}
We note that the only difference between (\ref{equ:mess_oneDirection}) and (\ref{equ:message_combined}) is the last term  $\tilde{\boldsymbol{m}}_{j\rightarrow i}^k$.

In the final step, we show that the matrix $\boldsymbol{B}_{i\rightarrow j}$ before $\hat{\boldsymbol{m}}_{j\rightarrow i}^k$ in (\ref{equ:message_combined2}) is in fact zero by using (\ref{equ:optimalParam}) and Lemma~\ref{lemma:identity}, indicating that $\hat{\boldsymbol{m}}_{j\rightarrow i}^k$ has no contribution to the new message $\hat{\boldsymbol{m}}_{i\rightarrow j}^{k+1}$.  To make the derivation below more readable, we let 
\begin{align}
\boldsymbol{C}_{i/j} = \boldsymbol{\Sigma}_i+\sum_{u\in \mathcal{N}_i/j}\boldsymbol{A}_{i | u}^T\boldsymbol{P}_{iu}^{-1}\boldsymbol{A}_{i | u}\nonumber.
\end{align}
As a result, the matrix $\boldsymbol{P}_{ij}$ can be alternatively expressed in terms of $\boldsymbol{C}_{i/j} $ as $\boldsymbol{P}_{ij}=\boldsymbol{A}_{i | j}  \boldsymbol{C}_{i/j}^{-1}\boldsymbol{A}_{i | j}^T$ by using (\ref{equ:optimalParam}).
The expression for $\boldsymbol{B}_{i\rightarrow j}$ can then be simplified as
\begin{align}
&{\boldsymbol{B}}_{i\rightarrow j} \hspace{-0.7mm} =\hspace{-0.7mm}   \boldsymbol{I} \hspace{-0.7mm}-\hspace{-0.7mm} 2\boldsymbol{A}_{i | j}  \Big( \boldsymbol{C}_{i/j} \hspace{-0.7mm}+\hspace{-0.7mm}\boldsymbol{A}_{i | j}^T\boldsymbol{P}_{ij}^{-1}\boldsymbol{A}_{i | j} \Big)^{-1}\hspace{-0.7mm} \boldsymbol{A}_{i | j}^T\boldsymbol{P}_{ij}^{-1} \nonumber \\
& \stackrel{(a)}{=}  \boldsymbol{I} -2\boldsymbol{A}_{i | j}  \Big( \boldsymbol{C}_{i/j}^{-1}-\boldsymbol{C}_{i/j}^{-1}\boldsymbol{A}_{i | j}^T(\boldsymbol{P}_{ij}+\boldsymbol{A}_{i | j}\boldsymbol{C}_{i/j}^{-1}\boldsymbol{A}_{i | j}^T)^{-1} \nonumber \\
&\hspace{32mm}\cdot\boldsymbol{A}_{i | j} \boldsymbol{C}_{i/j}^{-1}\Big) \boldsymbol{A}_{i | j}^T\boldsymbol{P}_{ij}^{-1} \nonumber \\
& \stackrel{(b)}{=}  \boldsymbol{I} -2\boldsymbol{A}_{i | j}  \Big( \boldsymbol{C}_{i/j}^{-1}-\frac{1}{2}\boldsymbol{C}_{i/j}^{-1}\boldsymbol{A}_{i | j}^T\boldsymbol{P}_{ij}^{-1}\boldsymbol{A}_{i | j} \cdot \boldsymbol{C}_{i/j}^{-1}\Big) \boldsymbol{A}_{i | j}^T\boldsymbol{P}_{ij}^{-1} \nonumber \\
& \stackrel{(c)}{=} \boldsymbol{I} - 2 \Big( \boldsymbol{P}_{ij} - \frac{1}{2}\boldsymbol{P}_{ij} \Big) \boldsymbol{P}_{ij}^{-1}\nonumber \\
& =\boldsymbol{0},
\end{align}
where step~$(a)$ makes use of Lemma~\ref{lemma:identity}, and step~$(b)$ and $(c)$ use the expression $\boldsymbol{P}_{ij}=\boldsymbol{A}_{i | j}  \boldsymbol{C}_{i/j}^{-1}\boldsymbol{A}_{i | j}^T$. The proof is complete. 

\section{Proof for Lemma~\ref{lemma:ML_MMSE}}
\label{appendix:proof_lemma_ML_MMSE}

Before presenting the proof, we first describe a property of a Gaussian marginal distribution in a lemma below: 
\begin{lemma}
Assume a random vector  $\boldsymbol{z}=[\boldsymbol{z}_1^T,\boldsymbol{z}_2^T]^T$ has a Gaussian distribution with mean vector $\boldsymbol{\mu}$ and covariance matrix $\boldsymbol{\Sigma}$, expressed as
\begin{align}
\boldsymbol{\mu} = \left[\begin{array}{c} \boldsymbol{\mu}_1 \\   \boldsymbol{\mu}_2 \end{array}\right] \; \textnormal{and} \; 
\boldsymbol{\Sigma} = \left[\begin{array}{cc} \boldsymbol{\Sigma}_{11} & \boldsymbol{\Sigma}_{12}  \\ \boldsymbol{\Sigma}_{12}^T   & \boldsymbol{\Sigma}_{22} \end{array}\right]. \nonumber
\end{align} 
Then the component vector $\boldsymbol{z}_i$, $i=1,2$, is also Gaussian distributed with mean vector $\boldsymbol{\mu}_i$ and covariance matrix $\boldsymbol{\Sigma}_{ii}$. 
\label{lemma:GaussianMarginal}
\end{lemma}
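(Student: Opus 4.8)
The plan is to prove the statement by way of the moment generating function (MGF), exploiting the fact that the MGF uniquely determines a distribution. First I would recall that the jointly Gaussian vector $\boldsymbol{z}\sim\mathcal{N}(\boldsymbol{\mu},\boldsymbol{\Sigma})$ has MGF
\[
M_{\boldsymbol{z}}(\boldsymbol{t}) = E\!\left[\exp(\boldsymbol{t}^T\boldsymbol{z})\right] = \exp\!\left(\boldsymbol{t}^T\boldsymbol{\mu} + \tfrac{1}{2}\boldsymbol{t}^T\boldsymbol{\Sigma}\boldsymbol{t}\right),
\]
where the argument is partitioned conformably as $\boldsymbol{t}=[\boldsymbol{t}_1^T,\boldsymbol{t}_2^T]^T$. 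The key observation is that the MGF of the marginal $\boldsymbol{z}_1$ is obtained simply by deactivating the coordinates associated with $\boldsymbol{z}_2$, since $M_{\boldsymbol{z}_1}(\boldsymbol{t}_1)=E[\exp(\boldsymbol{t}_1^T\boldsymbol{z}_1)]=M_{\boldsymbol{z}}([\boldsymbol{t}_1^T,\boldsymbol{0}^T]^T)$.

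Next I would substitute $\boldsymbol{t}_2=\boldsymbol{0}$ and read off the block structure, which yields $\boldsymbol{t}^T\boldsymbol{\mu}=\boldsymbol{t}_1^T\boldsymbol{\mu}_1$ and $\boldsymbol{t}^T\boldsymbol{\Sigma}\boldsymbol{t}=\boldsymbol{t}_1^T\boldsymbol{\Sigma}_{11}\boldsymbol{t}_1$, so that
\[
M_{\boldsymbol{z}_1}(\boldsymbol{t}_1) = \exp\!\left(\boldsymbol{t}_1^T\boldsymbol{\mu}_1 + \tfrac{1}{2}\boldsymbol{t}_1^T\boldsymbol{\Sigma}_{11}\boldsymbol{t}_1\right).
\]
This is exactly the MGF of a Gaussian with mean $\boldsymbol{\mu}_1$ and covariance $\boldsymbol{\Sigma}_{11}$; invoking uniqueness of the MGF then gives $\boldsymbol{z}_1\sim\mathcal{N}(\boldsymbol{\mu}_1,\boldsymbol{\Sigma}_{11})$. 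The symmetric argument, obtained by instead setting $\boldsymbol{t}_1=\boldsymbol{0}$, establishes the claim for $\boldsymbol{z}_2$, completing both cases $i=1,2$.

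Since this route reduces the entire argument to a single coordinate substitution, there is essentially no computational obstacle; the only non-elementary ingredient is the uniqueness theorem stating that the MGF (equivalently the characteristic function) determines the distribution. If one instead insists on a fully self-contained, density-level proof, then I would integrate the joint Gaussian density directly over $\boldsymbol{z}_2$. In that case the genuine work lies in the block-matrix algebra: expressing $\boldsymbol{\Sigma}^{-1}$ through its Schur complement, completing the square in $\boldsymbol{z}_2$ to factor the exponent, and evaluating the resulting Gaussian integral over $\boldsymbol{z}_2$ so that the surviving quadratic form in $\boldsymbol{z}_1$ is precisely $(\boldsymbol{z}_1-\boldsymbol{\mu}_1)^T\boldsymbol{\Sigma}_{11}^{-1}(\boldsymbol{z}_1-\boldsymbol{\mu}_1)$. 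This is exactly the step where the Woodbury-type manipulations of Lemma~\ref{lemma:identity} would be deployed, and it constitutes the main obstacle of the alternative approach; I would therefore favor the MGF argument for its brevity.
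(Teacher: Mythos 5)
Your argument is correct, but there is nothing in the paper to compare it against: the paper states Lemma~\ref{lemma:GaussianMarginal} as a known property of Gaussian distributions and gives no proof, using it only as an ingredient in the derivation of the ML reformulation in Appendix~B. Your moment-generating-function route is the standard one and is complete modulo the uniqueness theorem you cite: setting $\boldsymbol{t}_2=\boldsymbol{0}$ does collapse $\boldsymbol{t}^T\boldsymbol{\mu}$ to $\boldsymbol{t}_1^T\boldsymbol{\mu}_1$ and $\boldsymbol{t}^T\boldsymbol{\Sigma}\boldsymbol{t}$ to $\boldsymbol{t}_1^T\boldsymbol{\Sigma}_{11}\boldsymbol{t}_1$, and the symmetric substitution handles $i=2$. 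One point worth making explicit in favor of your chosen route over the density-integration alternative you sketch: the MGF (or characteristic function) of a Gaussian exists and has the stated form even when $\boldsymbol{\Sigma}$ is singular, so your primary argument covers degenerate joint Gaussians, whereas integrating the joint density over $\boldsymbol{z}_2$ presupposes $\boldsymbol{\Sigma}\succ 0$ (and the Schur-complement bookkeeping you anticipate). Given that the paper elsewhere is careful about singular covariance matrices, this robustness is a genuine, if minor, advantage of the proof you propose.
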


We now provide the derivation for (\ref{equ:z_ML}). The estimate $\hat{\boldsymbol{z}}_{l+1 | l}^{\star}$ in  (\ref{equ:z_condMean})  can be computed alternatively as 
\begin{align}
\hat{\boldsymbol{z}}_{l+1 | l}^{\star}  &=  E[\boldsymbol{z}_{l+1}  | \{\boldsymbol{y}_i | l \geq i \geq 0 \}]  \nonumber \\
 &\stackrel{(a)}{=} \arg\max_{\boldsymbol{z}_{l+1}} \ln p(\boldsymbol{z}_{l+1} |  \{\boldsymbol{y}_i | l \geq i \geq 0 \}) \nonumber \\
 & \stackrel{(b)}{=} \arg\max_{z_{l+1}} \Big[ \max_{ \{\boldsymbol{z}_{i} |  l\geq i\geq 0 \} }  \nonumber \\
 & \hspace{17mm} \ln p(\{ \boldsymbol{z}_i | l+1\geq i \geq 0  \} |  \{\boldsymbol{y}_i | l \geq i \geq 0 \})  \Big] \nonumber \\
 & = \arg\max_{z_{l+1}}\Big[ \max_{ \{\boldsymbol{z}_{i} |  l \geq i\geq 0 \} } \nonumber \\
 & \hspace{17mm} \ln p(\{ \boldsymbol{z}_i | l+1 \geq i \geq 0\}, \{ \boldsymbol{y}_i | l \geq i \geq 0 \}) \Big], \nonumber 
\end{align}
where step $(a)$ uses the fact that the conditional probability density function $p(\boldsymbol{z}_{l+1} |  \{\boldsymbol{y}_i | l \geq i \geq 0 \})$ is a Gaussian distribution, and step~(b) uses Lemma~\ref{lemma:GaussianMarginal} and the fact that  $p(\{ \boldsymbol{z}_i | l+1\geq i \geq 0  \} |  \{\boldsymbol{y}_i | l \geq i \geq 0 \}$ is a Gaussian distribution.  The proof is complete.

\section{Proof for Proposition~\ref{prop:D_P_relation}}
\label{appendix:proof_D_P_relation}

We first introduce $\boldsymbol{P}_{-1,0}= \boldsymbol{\Pi}_0$ to unify the expressions (\ref{equ:J_SS_init}) and (\ref{equ:J_SS}) for $\boldsymbol{J}_0$ and $\boldsymbol{J}_i$, $i \geq 1$.  The matrix $\boldsymbol{P}_{i,i+1}$, $i=0,\ldots, l$, can then be rewritten as
\begin{align}
\hspace{0mm}\boldsymbol{P}_{i,i+1} =&\boldsymbol{G}_i \boldsymbol{Q}_i \boldsymbol{G}_i^T  + \boldsymbol{F}_i  ( \boldsymbol{P}_{i-1,i}^{-1} + \boldsymbol{H}_i^T\boldsymbol{R}_i^{-1}\boldsymbol{H}_i  )^{-1}  \boldsymbol{F}_i^T  \nonumber \\
\stackrel{(a)}{=}& \boldsymbol{G}_i \boldsymbol{Q}_i \boldsymbol{G}_i^T  + \boldsymbol{F}_i \boldsymbol{P}_{i-1,i}\boldsymbol{F}_i^T-\boldsymbol{F}_i  \boldsymbol{P}_{i-1,i} \boldsymbol{H}_i^T\nonumber\\
&\hspace{15mm}\cdot (\boldsymbol{R}_i +\boldsymbol{H}_i \boldsymbol{P}_{i-1,i}\boldsymbol{H}_i^T)^{-1}\boldsymbol{H}_i \boldsymbol{P}_{i-1,i} \boldsymbol{F}_i^T  \nonumber\\
\stackrel{(b)}{=}& \boldsymbol{G}_i \boldsymbol{Q}_i\boldsymbol{G}_i^T + \boldsymbol{F}_i \boldsymbol{P}_{i-1,i}\boldsymbol{F}_i^T \nonumber\\
&- \boldsymbol{K}_{i}  (\boldsymbol{R}_i +\boldsymbol{H}_i \boldsymbol{P}_{i-1,i}\boldsymbol{H}_i^T) \boldsymbol{K}_{i}^T \nonumber \\
\stackrel{(c)}{=}& \boldsymbol{D}_{i+1},  \nonumber
\end{align}
where step $(a)$ uses Lemma~\ref{lemma:identity}, step $(b)$ uses (\ref{equ:K_update_Kalman}), and step $(c)$ uses (\ref{equ:D_update_Kalman}). The proof is complete. 

\section{Proof for Theorem~\ref{theorem:PDMM_Kalman_equ}}
\label{appendix:proof_PDMM_Kalman_equ}

By combining (\ref{equ:quadOptimization}), (\ref{equ:sum_quad_state_space2}), (\ref{equ:quad_Kalman1}), and (\ref{equ:J_SS}) with 
$\boldsymbol{x}_i \hspace{-0.3mm}=\hspace{-0.3mm} [\boldsymbol{u}_i^T,\boldsymbol{z}_i^T]^T$, 
the expression for $\boldsymbol{m}_{i\rightarrow i+1}^{\star}$ in Table~\ref{table:nodeSelChain} can be rewritten as 
\begin{align}
 &\hspace{-3.5mm} {\boldsymbol{m}}_{i\rightarrow i+1}^{\star}\hspace{-0.5mm} \nonumber \\
   =& 2 [\begin{array}{cc} \boldsymbol{G}_i  & \boldsymbol{F}_i \end{array}] \left(\boldsymbol{J}_i+ \left[\begin{array}{c} \boldsymbol{G}_i^T  \\ \boldsymbol{F}_i^T \end{array}\right]\boldsymbol{P}_{i,i+1}^{-1} [\begin{array}{cc} \boldsymbol{G}_i  & \boldsymbol{F}_i \end{array}] \right)^{-1}\nonumber \\
   &\hspace{20mm}\cdot \left[\begin{array}{c} \boldsymbol{0} \\ \boldsymbol{P}_{i-1,i}^{-1}\boldsymbol{m}_{i-1 \rightarrow i}^{\star}+ \boldsymbol{H}_i^T\boldsymbol{R}_i^{-1}\boldsymbol{y}_i   \end{array}\right] \nonumber \\
 \stackrel{(a)}{=}&[\begin{array}{cc} \boldsymbol{G}_i  & \boldsymbol{F}_i \end{array}] \boldsymbol{J}_i^{-1}\left[\begin{array}{c} \boldsymbol{0} \\  \boldsymbol{P}_{i-1,i}^{-1}\boldsymbol{m}_{i-1 \rightarrow i}^{\star} +\boldsymbol{H}_i^T\boldsymbol{R}_i^{-1}\boldsymbol{y}_i  \end{array}\right] \nonumber 
 \end{align}
 \begin{align}
& \hspace{-15.5mm}\stackrel{(b)}{=}\boldsymbol{F}_i\left( \boldsymbol{P}_{i-1,i}^{-1}+\boldsymbol{H}_i^T\boldsymbol{R}_i^{-1}\boldsymbol{H}_i \right)^{-1}\nonumber\\
 &\hspace{0mm}\cdot\left( \boldsymbol{P}_{i-1,i}^{-1}\boldsymbol{m}_{i-1 \rightarrow i}^{\star} +\boldsymbol{H}_i^T\boldsymbol{R}_i^{-1}\boldsymbol{y}_i \right), \label{equ:message_recursive1}
\end{align}
where step $(a)$ uses Lemma~\ref{lemma:identity} and (\ref{equ:P_SS}), and step $(b)$ uses (\ref{equ:J_SS}). 

Next we show that the two matrices multiplying $\boldsymbol{m}_{i-1 \rightarrow i}^{\star} $ and $\boldsymbol{y}_i$ in (\ref{equ:message_recursive1}) are the same as those in (\ref{equ:message_recursive}).  We first consider the matrix multiplying $\boldsymbol{m}_{i-1 \rightarrow i}^{\star}$, which can be simplified as 
\begin{align}
 &\boldsymbol{F}_i\left(\boldsymbol{P}_{i-1,i}^{-1} + \boldsymbol{H}_i^T\boldsymbol{R}_i^{-1}\boldsymbol{H}_i\right)^{-1}\boldsymbol{P}_{i-1,i}^{-1} \nonumber \\
 &\stackrel{(a)}{=} \boldsymbol{F}_i \left[\boldsymbol{I} -  \boldsymbol{P}_{i-1,i}\boldsymbol{H}_i^T(\boldsymbol{R}_i +\boldsymbol{H}_i \boldsymbol{P}_{i-1,i}\boldsymbol{H}_i^T )^{-1}\boldsymbol{H}_i \right] \nonumber \\
 &\stackrel{(b)}{=}  \boldsymbol{F}_i  -  \boldsymbol{K}_{i} \boldsymbol{H}_i,   \label{equ:matrix_equivalence1} 
\end{align}
where step $(a)$ uses Lemma~\ref{lemma:identity}, and step $(b)$ uses (\ref{equ:K_update_Kalman}) and Proposition~\ref{prop:D_P_relation}. 

The matrix multiplying $\boldsymbol{y}_i$ in (\ref{equ:message_recursive1}) can also be simplified as 
\begin{align}
 &\boldsymbol{F}_i\left(\boldsymbol{P}_{i-1,i}^{-1} + \boldsymbol{H}_i^T\boldsymbol{R}_i^{-1}\boldsymbol{H}_i\right)^{-1}\boldsymbol{H}_i^T\boldsymbol{R}_i^{-1} \nonumber \\
 &= \boldsymbol{F}_i  \Big[\boldsymbol{P}_{i-1,i} -  \boldsymbol{P}_{i-1,i}\boldsymbol{H}_i^T(\boldsymbol{R}_i +\boldsymbol{H}_i \boldsymbol{P}_{i-1,i}\boldsymbol{H}_i^T )^{-1}\boldsymbol{H}_i  \nonumber \\
&\hspace{50mm} \cdot \boldsymbol{P}_{i-1,i} \Big]  \boldsymbol{H}_i^T\boldsymbol{R}_i^{-1} \nonumber \\
 &= \boldsymbol{F}_i \boldsymbol{P}_{i-1,i}\boldsymbol{H}_i^T \Big[\boldsymbol{R}_i^{-1}   -  (\boldsymbol{R}_i +\boldsymbol{H}_i \boldsymbol{P}_{i-1,i}\boldsymbol{H}_i^T )^{-1} \nonumber\\
 &\hspace{30mm}\cdot \left(\boldsymbol{H}_i  \boldsymbol{P}_{i-1,i} \boldsymbol{H}_i^T+\boldsymbol{R}_i-\boldsymbol{R}_i\right)\boldsymbol{R}_i^{-1}  \Big]  \nonumber  \\
 &= \boldsymbol{F}_i  \boldsymbol{P}_{i-1,i}\boldsymbol{H}_i^T(\boldsymbol{R}_i +\boldsymbol{H}_i \boldsymbol{P}_{i-1,i}\boldsymbol{H}_i^T )^{-1} \nonumber \\
 &\stackrel{(a)}{=} \boldsymbol{K}_{i},   \label{equ:matrix_equivalence2} 
\end{align}
where the last step $(a)$ uses (\ref{equ:K_update_Kalman}) and Proposition~\ref{prop:D_P_relation}. Combining (\ref{equ:message_recursive1})-(\ref{equ:matrix_equivalence2}) produces (\ref{equ:message_recursive}). The proof is complete. 

\ifCLASSOPTIONcaptionsoff
  \newpage
\fi

\bibliographystyle{IEEEtran}
\bibliography{sigProcessing}

\end{document}